\definecolor{refkey}{rgb}{0,0,1}
\definecolor{labelkey}{rgb}{0,0,1}
\newcommand{\mage}{\color{magenta}}
\newcommand{\myatop}[2]{\genfrac{}{}{0pt}{}{#1}{#2}}
\author{Damanvir Singh Binner\thanks{Indian Institute of Science Education and
	Research (IISER), Mohali, India.  damanvirbinnar@iisermohali.ac.in } 
 \and Amarpreet Rattan\thanks{Department of Mathematics, Simon Fraser
University, Burnaby, Canada.  rattan@sfu.ca}}
\newtheorem{theorem}{Theorem}
\newtheorem{lemma}[theorem]{Lemma}
\newtheorem{prop}[theorem]{Proposition}
\theoremstyle{remark}
\newtheorem*{remark}{Remark}
\newcommand{\aone}{\@ifstar{{\mage (A1)}\xspace}{{\mage A1}\xspace}}
\newcommand{\atwo}{\@ifstar{{\mage (A2)}\xspace}{{\mage A2}\xspace}}
\newcommand{\athree}{\@ifstar{{\mage (A3)}\xspace}{{\mage A3}\xspace}}
\newcommand{\afour}{\@ifstar{{\mage (A4)}\xspace}{{\mage A4}\xspace}}
\newcommand{\afive}{\@ifstar{{\mage (A5)}\xspace}{{\mage A5}\xspace}}
\newcommand{\asix}{\@ifstar{{\mage (A6)}\xspace}{{\mage A6}\xspace}}
\newcommand{\bnewone}{\@ifstar{{\mage (B1)}\xspace}{{\mage B1}\xspace}}
\newcommand{\bone}{\@ifstar{{\mage (B2)}\xspace}{{\mage B2}\xspace}}
\newcommand{\btwo}{\@ifstar{{\mage (B3)}\xspace}{{\mage B3}\xspace}}
\newcommand{\bthree}{\@ifstar{{\mage (B4)}\xspace}{{\mage B4}\xspace}}
\newcommand{\bfive}{\@ifstar{{\mage (B5)}\xspace}{{\mage B5}\xspace}}
\newcommand{\cone}{\@ifstar{{\mage (C1)}\xspace}{{\mage C1}\xspace}}
\newcommand{\done}{\@ifstar{{\mage (D1)}\xspace}{{\mage D1}\xspace}}
\newcommand{\dtwo}{\@ifstar{{\mage (D2)}\xspace}{{\mage D2}\xspace}}
\newcommand{\dthree}{\@ifstar{{\mage (D3)}\xspace}{{\mage D3}\xspace}}
\newcommand{\eone}{\@ifstar{{\mage (E1)}\xspace}{{\mage E1}\xspace}}
\newcommand{\etwo}{\@ifstar{{\mage (E2)}\xspace}{{\mage E2}\xspace}}
\newcommand{\fone}{\@ifstar{{\mage (F1)}\xspace}{{\mage F1}\xspace}}
\newcommand{\ftwo}{\@ifstar{{\mage (F2)}\xspace}{{\mage F2}\xspace}}
\newcommand{\gone}{\@ifstar{{\mage (G1)}\xspace}{{\mage G1}\xspace}}
\newcommand{\gtwo}{\@ifstar{{\mage (G2)}\xspace}{{\mage G2}\xspace}}
\newcommand{\gthree}{\@ifstar{{\mage (G3)}\xspace}{{\mage G3}\xspace}}
\newcommand{\gfour}{\@ifstar{{\mage (G4)}\xspace}{{\mage G4}\xspace}}
\newcommand{\hone}{\@ifstar{{\mage (H1)}\xspace}{{\mage H1}\xspace}}
\newcommand{\ione}{\@ifstar{{\mage (I1)}\xspace}{{\mage I1}\xspace}}
\newcommand{\itwo}{\@ifstar{{\mage (I2)}\xspace}{{\mage I2}\xspace}}
\newcommand{\jone}{\@ifstar{{\mage (J1)}\xspace}{{\mage J1}\xspace}}
\newcommand{\kone}{\@ifstar{{\mage (K1)}\xspace}{{\mage K1}\xspace}}
\newcommand{\freq}{f}
\begin{document}

\title{\Large{A Comparison of Integer Partitions Based on Smallest Part}}

\date{}
\maketitle

\begin{abstract}
	For positive integers $n, L$ and $s$, consider the following two sets that both contain partitions of $n$ with
	the difference between the largest and smallest parts bounded by $L$:  the
	first set contains partitions with smallest part
	$s$, while the second set contains partitions with smallest part at least $s+1$.  Let $G_{L,s}(q)$
	be the generating series whose coefficient of
	$q^n$ is difference between the sizes of the above two sets of partitions.  This
	generating series was introduced by Berkovich and Uncu in 2019.   Previous
	results concentrated on the nonnegativity of $G_{L,s}(q)$ in the cases $s=1$
	and $s=2$.  In the present paper, we show the eventual positivity of $G_{L,s}(q)$
	for general $s$ and also find a precise nonnegativity result for the case
	$s=3$.  
 \end{abstract}
 
\section{Introduction} \label{Intro}

Let $n$ be a nonnegative integer. A \emph{partition $\pi = (\pi_1,
\pi_2, \dots)$ of $n$} is a weakly decreasing list of positive
integers whose sum is $n$, and we write $|\pi| = n$ to indicate this.  We
allow the empty partition as the unique partition of $0$.  Each $\pi_i$
is known as a \emph{part} of $\pi$.  A standard way to visualize $\pi$ is through its \emph{Ferrers diagram}, which is a collection of left justified  rows of boxes, with the $i^\mathrm{th}$ row containing $\pi_i$ boxes.

In the present article, it is
more convenient to use the notation that expresses the number of parts
of each size in a partition.   In this notation, we write $\pi = (1^{f_1}, 2^{f_2},
\ldots)$, where $f_i$ is the \emph{frequency} of $i$ or the number of times a
part $i$ occurs in $\pi$.   Thus, each frequency $f_i$ is a nonnegative integer,
and if $f_i = 0$, then $\pi$ has no part of size $i$.  When the frequency
of a number is 0, it may or may not be omitted in the expression.  In the latter
notation, it is clear that $|\pi| = \sum_i i \cdot f_i$.   Thus $(4,4,2,2,1)$,
$(1^1, 2^2, 3^0, 4^2, 6^0)$, and $(1^1, 2^2, 4^2, 5^0)$ all represent the same
partition of 13.  We let $s(\pi)$ and $l(\pi)$ denote the smallest and largest parts of $\pi$,
    respectively, and $\mho$ denotes the set of partitions $\pi$ with $|\pi| > 0$.

% Berkovich and Uncu \cite{BerkovichAlexander2017SEPI} defined the following generating series. 
%\begin{itemize}
%\item $G_{L,1}(q)$ is the generating series
%    \begin{equation*}
%        G_{L,1}(q) = \sum_{\substack{\pi \in \mho, \\ s(\pi)=1, \\ l(\pi)-s(\pi)
%        \leq L}} q^{|\pi|} -  \sum_{\substack{\pi \in \mho, \\ s(\pi) \geq 2, \\
%        l(\pi)-s(\pi) \leq L}} q^{|\pi|}, 
%    \end{equation*}
%  \item $G_{L,2}(q)$ is the generating series
%    \begin{equation*}
%        G_{L,2}(q) = \sum_{\substack{\pi \in \mho, \\ s(\pi)=2, \\ l(\pi)-s(\pi)
%        \leq L}} q^{|\pi|} -  \sum_{\substack{\pi \in \mho, \\ s(\pi) \geq 3, \\
%        l(\pi)-s(\pi) \leq L}} q^{|\pi|}, 
%    \end{equation*}
%    
For indeterminates $a$ and $q$, and a positive integer $n$, define
$$
(a; q)_n := (1-a)(1-aq) \cdots (1-aq^{n-1}).
$$
The central objects of this article are the following series.  For positive
integers $L$, $s$ and $k$, define
\begin{itemize}
\item $G_{L,s}(q)$ to be the generating series
    \begin{equation}
        G_{L,s}(q) := \sum_{\substack{\pi \in \mho, \\ s(\pi)=s, \\ l(\pi)-s(\pi)
        \leq L}} q^{|\pi|} -  \sum_{\substack{\pi \in \mho, \\ s(\pi) \geq s+1, \\
        l(\pi)-s(\pi) \leq L}} q^{|\pi|},\label{GLsq}
    \end{equation} and 
\item $H_{L,s,k}(q)$ to be the generating series 
    \begin{equation}\label{eq:hlsk}
        H_{L,s,k}(q) := \frac{q^s(1-q^k)}{(q^s;q)_{L+1}} - \left(\frac{1}{(q^{s+1};q)_L}-1\right).
    \end{equation}
\end{itemize}

A series $\sum_{n \geq 0} a_n q^n$ is said to be \emph{eventually
positive} if there exists some $l \in \mathbb{N}$ such that $a_n > 0$ for all $n \geq l$.  

Berkovich and Uncu conjectured that $H_{L,s,k}(q)$ is a eventually positive for
all positive integers triples $L \geq 3$, $s$ and $k \geq s+1$.  This conjecture was recently proved independently by Zang
and Zeng \cite[Theorem 1.3]{zang} and by the present authors \cite[Section
3]{BR20}.  The two proofs are substantially different.  While the proof of Zang and Zeng was partly combinatorial and partly
analytic, the proof of the present authors was entirely combinatorial. The
present authors in fact proved a stronger result. To state this, we need the following notation:

\begin{align}\label{eq:defplsgammals}
    \begin{split}
        &\bullet\; P_{L,s} = (s+1)(s+2)\ldots(s+L),\\
        &\bullet\; \gamma(L,s) =
        \left(\left(s+1\right)+\left(s+2\right)+\cdots+\left(s+L\right)\right)\\
        &\hspace{6.2cm} \cdot
		\left(P_{L,s}^{\left(P_{L,s}^2-1\right)L+2}+\left(\left(P_{L,s}^2-1\right)L-2\right)P_{L,s}\right),\\
        &\bullet\;  \Gamma(s) = \gamma(3s+2, s).
    \end{split}
    \end{align}
	The following theorem appears in \cite[Theorem 5]{BR20}.
\begin{theorem}
\label{Genk}
  For positive integers $L$, $s$ and $k$, with $L \geq 3$ and $k \geq s+1$, the coefficient of
  $q^N$ in $H_{L,s,k}(q)$ is positive whenever $N \geq \Gamma(s)$.
  \end{theorem}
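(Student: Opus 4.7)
The plan is to interpret $[q^N]H_{L,s,k}(q)$ as a signed count of partitions, construct an explicit injection from the ``negative'' family into the ``positive'' family, and then produce enough unmatched positive partitions to witness strict positivity once $N\ge\Gamma(s)$. Using the identity $\frac{1}{(q^{s+1};q)_L}=\frac{1-q^s}{(q^s;q)_{L+1}}$, a short calculation shows that for $N\ge 1$,
$$[q^N]H_{L,s,k}(q) \;=\; a_N - a_{N-k} - p_N,$$
where $p_N$ counts partitions of $N$ with parts in $\{s+1,\ldots,s+L\}$ (with $p_0=1$), and $a_N=\sum_{j\ge 1}p_{N-js}$ counts partitions of $N$ with smallest part exactly $s$ and all parts in $\{s,\ldots,s+L\}$. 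Thus the positivity claim becomes the existence of an injection from
$$\{\pi:\,|\pi|=N,\ \text{parts in }\{s+1,\ldots,s+L\}\}\,\sqcup\,\{\pi:\,|\pi|=N-k,\ s(\pi)=s,\ \text{parts in }\{s,\ldots,s+L\}\}$$
into partitions counted by $a_N$, with a nonempty, quantifiable complement.

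The injection is assembled in two pieces. For a partition $\sigma$ counted by $p_N$, one produces a partition of $N$ with smallest part $s$ by inserting a block of $s$'s while removing a compensating part from $\sigma$; for a partition $\tau$ of $N-k$ counted by $a_{N-k}$, one augments $\tau$ by $k$ using a specific multiset of extra parts drawn from $\{s,\ldots,s+L\}$ (feasible because $k\ge s+1$). Both maps land in partitions counted by $a_N$, and a careful choice of the insertion recipes guarantees that their combined image omits a distinguished residual family of partitions of $N$ with smallest part $s$ having a prescribed ``sparse'' shape. Counting this residual family produces a lower bound for $[q^N]H_{L,s,k}(q)$ of the form: a positive-leading quasi-polynomial in $N$ of degree $L-1$ with period dividing $P_{L,s}$.

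Extracting the explicit bound $\Gamma(s)=\gamma(3s+2,s)$ is the quantitative step. Within each pair of residues for $(N,N-k)$ modulo $P_{L,s}$, the surplus is a genuine polynomial in $N$ with positive leading coefficient and subleading coefficients bounded explicitly in terms of $P_{L,s}$. An elementary effective bound---a polynomial of degree $d$ with leading coefficient $c$ and all other coefficients bounded in absolute value by $M$ is positive once its argument exceeds $M/c$---applied uniformly across all $P_{L,s}^{2}$ ordered residue pairs and all $L$ subleading comparisons, yields the exponent $(P_{L,s}^{2}-1)L+2$ appearing in $\gamma(L,s)$. The specific choice $L=3s+2$ in $\Gamma(s)$ serves to supply a single threshold valid for all $L\ge 3$: one verifies the residue-class analysis directly when $L\le 3s+2$, and a comparison argument handles the case $L>3s+2$ without further increasing the bound.

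The main obstacle is the case $k\not\equiv 0\pmod{s}$, in which the residues of the shifts $N-js$ and $N-js-k$ traverse different arithmetic progressions modulo $P_{L,s}$. The injection's surplus count is then not a single polynomial but a collection indexed by residue pairs, and aligning all of them so that a common threshold makes every surplus strictly positive is what requires the large number of polynomial-comparison iterations encoded in the tower $P_{L,s}^{(P_{L,s}^{2}-1)L+2}$. Ensuring the injection remains well-defined across all these cases---particularly the bookkeeping needed to guarantee that insertions never push a part above $s+L$ or cause collisions between the two pieces of the map---is the essential technical difficulty that controls the size of the final bound.
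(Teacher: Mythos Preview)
This theorem is not proved in the present paper; it is quoted from the authors' earlier work \cite{BR20}. What the paper does provide (in the proof of Theorem~\ref{ModiGenk} and in Table~\ref{tab:casesbound}) is a summary of that proof's architecture: for $s+1\le k\le s+L$ one reads $[q^N]H_{L,s,k}(q)$ directly as $|\{\pi\in I_{L,s,k}:|\pi|=N\}|-|\{\pi\in D_{L,s}:|\pi|=N\}|$, constructs an explicit injection $\phi\colon D_{L,s}\to I_{L,s,k}$, and exhibits partitions outside its range. Table~\ref{tab:casesbound} records seven cases, split by the size of $L$ relative to $s$ (and by parity when $L=s+2$); in each case the unmatched partitions have only two free exponents, e.g.\ $(s^{10},(s+1)^x,(s+2)^y)$, so their count is \emph{linear} in $N$ via Proposition~\ref{thm:prop1}, not of degree $L-1$ as you assert.

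Your proposal diverges in two ways. First, you expand the coefficient as $a_N-a_{N-k}-p_N$ and propose to inject a disjoint union of two families into the $a_N$-family. The identity is correct, but when $s+1\le k\le s+L$ the quantity $a_N-a_{N-k}$ already collapses to the single count $|I_{L,s,k}(N)|$, and the cited proof exploits that collapse so that only one family, $D_{L,s}$, needs to be injected. Your formulation makes the collision bookkeeping strictly harder, and you do not carry it out. Second, and this is the genuine gap, no injection is actually constructed. ``Inserting a block of $s$'s while removing a compensating part'' and ``augment $\tau$ by $k$ using a specific multiset'' are not maps; the entire substance of the theorem lies in specifying these recipes so that they are well-defined, land in the allowed part-range $\{s,\dots,s+L\}$, and have disjoint images, which is precisely what forces the multi-case analysis of \cite{BR20}. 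Your account of how the exact shape of $\Gamma(s)$ arises is likewise post hoc: you assert that an elementary polynomial bound ``yields the exponent $(P_{L,s}^{2}-1)L+2$'' and that the choice $L=3s+2$ ``serves'' to cover all $L\ge 3$, but you neither produce the polynomials nor explain why $\gamma(3s+2,s)$ dominates the required threshold for every $L$. As written this is an outline of a plausible strategy, not a proof.
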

  We emphasize that Theorem \ref{Genk} is stronger than the conjecture of
  Berkovich and Uncu about $H_{L,s,k}(q)$ as it explicitly gives the bound for
  when $H_{L,s,k}(q)$ is positive, and it further states this bound depends only on $s$. 

Berkovich and Uncu \cite[Theorems 5.1 and
5.2]{BerkovichAlexander2017SEPI} showed that
the series $G_{L,s}(q)$ and $H_{L,s,k}(q)$, for $s=1$ and $s=2$ and $L \geq 1$,
satisfy the following simple relationship: 
\begin{equation}\label{s=1}
	G_{L,s}(q) = \frac{H_{L,s,L}(q)}{1-q^L}.
\end{equation}

A series $S(q) = \sum_{n \geq 0} a_nq^n $ is said to be \emph{nonnegative} if $a_n
\geq 0$ for all $n$. The nonnegativity of the series $S(q)$ is denoted by $S(q) \succeq 0.$  

Berkovich and Uncu \cite[Theorem 5.1]{BerkovichAlexander2017SEPI} prove, using
\eqref{s=1}, that $G_{L,1}(q) \succeq 0$.  Also using \eqref{s=1}, they conjectured
Theorem \ref{s=2} below, which pertains to the nonnegativity of $G_{L,2}(q)$.
Theorem \ref{s=2} was proved by the present authors \cite[Section 3]{BR20}.

\begin{theorem}
\label{s=2}
For $L=3$, 
\begin{equation*}
	G_{L,2}(q)+q^3 + q^9 + q^{15} \succeq 0;
\end{equation*}
for $L=4$, 
\begin{equation*}
	G_{L,2}(q)+q^3 + q^9 \succeq 0;
\end{equation*}
and for $L \geq 5$,
\begin{equation*}
	G_{L,2}(q)+q^3  \succeq 0.
\end{equation*}
\end{theorem}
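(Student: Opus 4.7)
The starting point is the identity
\begin{equation*}
G_{L,2}(q) = \frac{H_{L,2,L}(q)}{1-q^{L}},
\end{equation*}
which is the $s=2$ case of \eqref{s=1}. Writing $h_n := [q^n] H_{L,2,L}(q)$ and expanding $(1-q^{L})^{-1} = \sum_{m \geq 0} q^{mL}$, one sees that
\begin{equation*}
[q^N] G_{L,2}(q) = \sum_{m \geq 0} h_{N-mL},
\end{equation*}
a cumulative sum of $h$-coefficients along the arithmetic progression of common difference $L$ ending at $N$. Proving $G_{L,2}(q)$ is nonnegative (up to the stated corrections) therefore reduces, residue class by residue class modulo $L$, to showing that the partial sums $T_r(M) := \sum_{j=0}^{M} h_{r + jL}$ are nonnegative for each $r \in \{0, 1, \dots, L-1\}$, with the only exceptions being the indices absorbed by the corrections $q^{3}$, $q^{9}$, $q^{15}$.

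Theorem~\ref{Genk} applied with $s = 2$ gives $h_n > 0$ for every $n \geq \Gamma(2)$, so each partial sum sequence $M \mapsto T_r(M)$ is eventually strictly increasing and therefore remains nonnegative once it first becomes so. Only finitely many partial sums in each residue class can thus fail to be nonnegative, turning the problem into a finite verification.

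The concrete plan is as follows. First, I would extract $h_n$ for small $n$ directly from the closed form \eqref{eq:hlsk}, exploiting the fact that the low-order coefficients of $1/(q^2;q)_{L+1}$ and $1/(q^3;q)_L$ stabilize once $L$ is sufficiently large. For $L = 3$ and $L = 4$ I would then carry out a direct case analysis to locate the negative partial sums; these should be at $N \in \{3, 9, 15\}$ for $L = 3$ and $N \in \{3, 9\}$ for $L = 4$, each equal to $-1$, so that the listed monomial corrections exactly compensate them. For $L \geq 5$ the stabilization of $h_n$ at low orders should allow a single, $L$-uniform argument showing that the only negative partial sum is $T_{3}(0) = h_{3} = -1$, corresponding to $N = 3$.

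The principal obstacle I anticipate is this uniform argument for $L \geq 5$: one must rule out any negative $T_r(M)$ other than $T_{3}(0) = -1$ without a case split on $L$. In particular, for indices such as $N = 9$ and $N = 15$ the corresponding partial sums include extra terms $h_{9-L}$ and $h_{15-L}$ (with $L \geq 5$), and one must verify that the resulting sums are already nonnegative. I expect to handle this by combining the stabilized low-order expansion of $H_{L,2,L}(q)$ with the explicit positivity estimates produced in the proof of Theorem~\ref{Genk}, supplemented if necessary by a small combinatorial injection at the finitely many remaining indices that the stabilization argument cannot reach.
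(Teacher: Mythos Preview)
Your overall framework is correct and matches the paper's approach: one uses $G_{L,2}(q)=H_{L,2,L}(q)/(1-q^L)$ to reduce to partial sums of $h_n$ along residue classes, and the stabilization of $h_n$ for small $n$ (independent of $L$ once $L>n$) is indeed exploited. The gap is quantitative but decisive. The threshold $\Gamma(2)$ from Theorem~\ref{Genk} is astronomically large: by \eqref{eq:defplsgammals} one has $\Gamma(2)=\gamma(8,2)$, whose dominant term is $P_{8,2}$ raised to a power exceeding $8P_{8,2}^2$, with $P_{8,2}=3\cdot4\cdots10=1{,}814{,}400$; thus $\Gamma(2)$ has more than $10^{13}$ digits. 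Stabilization, on the other hand, only controls $h_n$ with $n<L$, so for a uniform argument in $L\ge 5$ it gives you at most the first few coefficients. The region between ``stabilized small $n$'' and $n\ge\Gamma(2)$ is therefore essentially the entire problem, and no ``finite verification'' bridges it.

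The actual proof (carried out in \cite{BR20} for $s=2$, and in this paper for $s=3$ via Lemmas~\ref{Helpful2}--\ref{Five}) does not invoke Theorem~\ref{Genk} at all for the precise correction polynomials. Instead, for each range of $L$ one constructs an explicit injection $D_{L,s}\to I_{L,s,L}$ as in \eqref{eq:gamma}, valid from a \emph{genuinely small} threshold; for instance Lemma~\ref{Helpful2} gives $h_n>0$ for all $n\ge 21$ when $L\ge 22$, via a lengthy case analysis on partitions, and separate ad hoc injections handle each small $L$ individually (with bounds like $N\ge L^2+10L+7$, $N\ge 67$, etc.). Only after these sharp bounds are in hand does a feasible computer check of the remaining $h_n$ finish the argument. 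What you describe as a supplementary ``small combinatorial injection at the finitely many remaining indices'' is in fact the principal content of the proof; the paper itself remarks that the bound coming from Theorem~\ref{Genk} is ``extremely large'' and far from what is needed for results of this type.
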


%\begin{itemize}
%\item  $G_{L,s}(q)$ is the generating series
%    \begin{equation}
%    \label{GLsq}
%        G_{L,s}(q) := \sum_{\substack{\pi \in \mho, \\ s(\pi)=s, \\ l(\pi)-s(\pi)
%        \leq L}} q^{|\pi|} -  \sum_{\substack{\pi \in \mho, \\ s(\pi) \geq s+1, \\
%        l(\pi)-s(\pi) \leq L}} q^{|\pi|}, 
%    \end{equation}
%    \end{itemize}

In the present article, we explore the nonnegativity properties of $G_{L,s}(q)$ for
general positive integers $s$.  We begin the study of this series with the next result. 
\begin{theorem}
\label{Generals}
For positive integers $L \geq 1$, $$ G_{L,s}(q) = \frac{H_{L,s,L}(q)}{1-q^L}. $$
\end{theorem}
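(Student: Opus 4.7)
The plan is to decompose $G_{L,s}(q) = A(q) - B(q)$, where $A(q)$ and $B(q)$ are the two sums in the definition \eqref{GLsq}, and then recognise the difference as $H_{L,s,L}(q)/(1-q^L)$ via \eqref{eq:hlsk}.

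For $A(q)$, a partition $\pi$ with $s(\pi)=s$ and $l(\pi)-s(\pi) \le L$ is exactly one whose parts lie in $\{s, s+1, \ldots, s+L\}$ and which has at least one part equal to $s$. Subtracting partitions with parts in $\{s+1, \ldots, s+L\}$ from partitions with parts in $\{s, s+1, \ldots, s+L\}$ gives
$$A(q) = \frac{1}{(q^s;q)_{L+1}} - \frac{1}{(q^{s+1};q)_L} = \frac{q^s}{(q^s;q)_{L+1}}.$$

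For $B(q)$, I would split by the value $t := s(\pi) \ge s+1$. For each fixed $t$, the partitions contributing to $B(q)$ with smallest part exactly $t$ have the same structure as those counted by $A(q)$ but with $s$ replaced by $t$, so
$$B(q) = \sum_{t \ge s+1} \frac{q^t}{(q^t;q)_{L+1}}.$$
The central technical step is the telescoping identity
$$\frac{q^t}{(q^t;q)_{L+1}} = \frac{1}{1-q^L}\left[\frac{1}{(q^t;q)_L} - \frac{1}{(q^{t+1};q)_L}\right],$$
which follows from the factorisations $(q^t;q)_{L+1} = (q^t;q)_L(1-q^{t+L}) = (1-q^t)(q^{t+1};q)_L$ by clearing denominators. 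Summing over $t \ge s+1$ and using $\frac{1}{(q^t;q)_L} \to 1$ as $t \to \infty$ in the formal power series topology yields
$$B(q) = \frac{1}{1-q^L}\left[\frac{1}{(q^{s+1};q)_L} - 1\right].$$

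Combining these expressions,
$$G_{L,s}(q) = \frac{q^s}{(q^s;q)_{L+1}} - \frac{1}{1-q^L}\left[\frac{1}{(q^{s+1};q)_L} - 1\right] = \frac{1}{1-q^L}\left[\frac{q^s(1-q^L)}{(q^s;q)_{L+1}} - \left(\frac{1}{(q^{s+1};q)_L}-1\right)\right],$$
which by \eqref{eq:hlsk} at $k = L$ equals $H_{L,s,L}(q)/(1-q^L)$. The only real obstacle is verifying the telescoping identity; once that is in hand, the remainder of the proof is a direct comparison with the defining formulas.
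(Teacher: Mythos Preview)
Your proof is correct, and your treatment of the first sum $A(q)$ matches the paper's. However, your argument for the second sum $B(q)$ takes a genuinely different route. The paper fixes the number of parts $n$, interprets the Ferrers diagram as a rectangle of columns over the smallest part times a partition with at most $n-1$ parts and largest part at most $L$, expresses this via $q$-binomial coefficients, and then invokes the $q$-binomial theorem to sum over $n$. You instead fix the smallest part $t$, obtain the closed form $\dfrac{q^t}{(q^t;q)_{L+1}}$ for each $t$, and telescope using the identity
\[
\frac{q^t}{(q^t;q)_{L+1}}=\frac{1}{1-q^L}\left[\frac{1}{(q^t;q)_L}-\frac{1}{(q^{t+1};q)_L}\right],
\]
which is easily verified by writing $\dfrac{1}{(q^t;q)_L}=\dfrac{1-q^{t+L}}{(q^t;q)_{L+1}}$ and $\dfrac{1}{(q^{t+1};q)_L}=\dfrac{1-q^{t}}{(q^t;q)_{L+1}}$. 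Your approach is more elementary and self-contained: it avoids the $q$-binomial theorem entirely and reduces the computation to a one-line algebraic identity plus a formal-series limit. The paper's approach, on the other hand, situates the result within standard $q$-series machinery, which may be preferable for readers already fluent in that language.
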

We prove Theorem \ref{Generals} in Section \ref{PGLs}, but we emphasize that the
proof is essentially the one given by Berkovich and Uncu for the cases $s=1$
and $s=2$, with only minor modifications.  We include this proof for completeness.  Next we show that for any $L \geq
s+1$, the series $G_{L,s}(q)$ is eventually positive, and the bound after which
the coefficient of $q^N$ is positive can be written explicitly in terms of $s$ only.  Define the quantities
 \begin{equation}
       \label{deltas}
\begin{aligned}
 &\delta(s) := e^{3\Gamma(s)},\\
\textnormal{ and }\;\;\; &\delta'(s) := 10s+(s+2)(s+3)(\delta(s)+1).
\end{aligned}
\end{equation}
Obviously $\delta'(s) > \delta(s)$ for all positive $s$.  
\begin{theorem}
 \label{GLs}
    If $s$ and $L \geq s+1$ are positive integers, then the coefficient of $q^n$
	in $G_{L,s}(q)$ is positive whenever $n \geq \delta'(s)$, so $G_{L,s}(q)$
	eventually positive.
\end{theorem}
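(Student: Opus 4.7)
The plan is to combine Theorems~\ref{Generals} and~\ref{Genk}.  Theorem~\ref{Generals} gives
\[
[q^n]G_{L,s}(q)=\sum_{j=0}^{\lfloor n/L\rfloor}[q^{n-jL}]H_{L,s,L}(q),
\]
and Theorem~\ref{Genk} says that each summand with $n-jL\geq\Gamma(s)$ contributes at least $1$, provided $L\geq 3$; the isolated case $s=1$, $L=2$ can be handled separately using Berkovich and Uncu's nonnegativity result for $G_{L,1}(q)$.  Call an index $j$ \emph{good} if $n-jL\geq\Gamma(s)$ and \emph{bad} otherwise.

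Next, I would derive an $L$-free upper bound on $|[q^N]H_{L,s,L}(q)|$ for $0\leq N<\Gamma(s)$.  From
\[
H_{L,s,L}(q)=\frac{q^s(1-q^L)}{(q^s;q)_{L+1}}-\frac{1}{(q^{s+1};q)_L}+1,
\]
each individual coefficient is bounded above by a constant multiple of the partition function $p(N)$.  Applying the crude estimate $p(N)\leq e^{\pi\sqrt{2N/3}}\leq e^{3N}$ for $N\geq 1$ then yields $|[q^N]H_{L,s,L}(q)|\leq \delta(s)=e^{3\Gamma(s)}$.

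Let $P$ and $B$ denote the numbers of good and bad indices among $\{0,1,\dots,\lfloor n/L\rfloor\}$.  Then
\[
[q^n]G_{L,s}(q)\geq P-B\,\delta(s),
\]
with $P\geq\lfloor(n-\Gamma(s))/L\rfloor+1$ and $B\leq\lceil\Gamma(s)/L\rceil$.  To establish positivity for $n\geq\delta'(s)$ uniformly in $L\geq s+1$, I would analyze three regimes: (i) $L>n$, where only $j=0$ contributes and positivity follows directly from Theorem~\ref{Genk}; (ii) $L$ moderate, where $B$ is bounded in terms of $(s+2)(s+3)$ and the good contributions comfortably dominate; and (iii) $L$ small, where $P\sim n/L$ easily exceeds $B\,\delta(s)$.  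Choosing the thresholds between regimes so that each case yields the same requirement on $n$ should collapse them into the uniform bound $n\geq 10s+(s+2)(s+3)(\delta(s)+1)$.

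The main obstacle will be precisely this uniformity in $L$.  A naive application of $P-B\delta(s)>0$ gives a bound of order $\Gamma(s)\,\delta(s)$, much worse than the claimed $(s+2)(s+3)(\delta(s)+1)$.  Closing this gap requires sharper bookkeeping in the intermediate-$L$ regime, most likely by exploiting that the bad indices correspond to values $n-jL$ in an arithmetic progression of step $L$: for $L$ comparable to $\Gamma(s)/((s+2)(s+3))$ there are only a constant (in $s$) number of bad contributions to control, while outside that range a simpler count closes the argument.  The additive $10s$ term should absorb floor/ceiling errors and the small-$N$ portion where the coefficients of $H_{L,s,L}(q)$ are already zero.
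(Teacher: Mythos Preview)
Your approach shares the overall structure of the paper's proof---split the sum from Theorem~\ref{Generals} into good and bad ranges and control each---but it is missing the key ingredient that makes the bound $\delta'(s)$ attainable.  You use only Theorem~\ref{Genk}, which gives each good term the lower bound $1$; with that alone, the best you can hope for is $P > (\text{total bad contribution})$.  Even if you sharpen your bad-term estimate from $B\,\delta(s)$ to the total $\sum_{m<\Gamma(s)}e^{3m}<\delta(s)$ (as the paper in fact does, via the geometric sum), you would still need $P\geq\delta(s)$.  But $P\leq\lfloor n/L\rfloor+1$, and for $L$ comparable to $n$ (say $L=n-1$ or $L=\lfloor n/2\rfloor$) this gives $P\leq 2$, nowhere near $\delta(s)$.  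Your regime analysis does not escape this: regimes (ii) and (iii) both collapse to a requirement of order $\Gamma(s)\,\delta(s)$, exactly the bound you yourself flag as too weak, and no arithmetic-progression bookkeeping on the bad indices changes the fact that when $L$ is large there simply are not enough good indices to use.

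The paper closes this gap with a separate result, Theorem~\ref{ModiGenk}: for $N\geq\Gamma(s)$ the coefficient of $q^N$ in $H_{L,s,L}(q)$ is at least $\bigl\lfloor (N-10s)/((s+2)(s+3))\bigr\rfloor$.  This quantitative strengthening of Theorem~\ref{Genk} is obtained by going back to the injection $\phi$ in \eqref{eq:gamma} and \emph{counting} the partitions in the codomain that lie outside its range.  With this in hand, a \emph{single} term $m=n\geq\delta'(s)$ already contributes at least $\delta(s)$, which by itself dominates the entire bad sum; the remaining good terms with $\Gamma(s)\leq m<\delta'(s)$ are simply discarded as nonnegative.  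This is precisely why the factor $(s+2)(s+3)$ and the additive $10s$ appear in $\delta'(s)$: they come directly from the denominator and numerator shift in Theorem~\ref{ModiGenk}, not from any counting of bad indices.
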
  
We prove Theorem \ref{GLs} in Section \ref{PGLs}.  Then we focus on the case
$s=3$ and obtain an extension of Theorem \ref{s=2};  that is, we show that, with the
exception of a few small terms, the series $G_{L,s}(q)$ is nonnegative.  The
next result states this precisely, and its proof is in Section \ref{PGLthree}.  
\begin{theorem}
\label{GLthree} 
For $L \geq 10$, $$ G_{L,3}(q) +q^4+q^5+q^8+q^{10}+q^{12}+q^{14}+q^{16} \succeq 0.$$ For $5 \leq L \leq 9$, we have the following results.
\begin{gather*}
 G_{9,3}(q)+q^4+q^5+q^8+q^{10}+q^{12}+q^{14}+2q^{16} \succeq 0. \\
 G_{8,3}(q)+q^4+q^5+q^8+q^{10}+q^{12}+q^{14}+2q^{16}+q^{20} \succeq 0. \\
 G_{7,3}(q)+q^4+q^5+q^8+q^{10}+q^{12}+2q^{14}+q^{16}+q^{20} \succeq 0. \\
 G_{6,3}(q)+q^4+q^5+q^8+q^{10}+q^{12}+q^{13}+2q^{14}+2q^{16}+q^{18}+2q^{20}+q^{22} \succeq 0. \\
 G_{5,3}(q)+q^4+q^5+q^8+q^{10}+2q^{12}+q^{13}+q^{14}+2q^{16}
 +q^{17}+q^{18}+3q^{20}\phantom{+q^{22}+q^{24}}\\
 \phantom{G_{5,3}(q)+q^4+q^5+q^8+q^{10}+2q^{12}+q^{13}xxxxxxxxxxxxxxxx} +q^{22}+q^{24}+q^{28} \succeq 0. 
 \end{gather*}
and for $L=4$, 
\begin{multline*}
G_{4,3}(q)+ q^4 + q^5 + q^8 + q^{10} + q^{11} + 2q^{12} + 2q^{14} + 3q^{16} + q^{17} \\
 + 2q^{18} + q^{19} + 4q^{20} + 3q^{22} + q^{23} +  4q^{24} + q^{25} + 4q^{26} + 5q^{28} \\
  + q^{29} + 3q^{30} + 6q^{32} + 3q^{34} + 4q^{36} + 2q^{38} + 4q^{40} + 2q^{44}     \succeq 0. 
\end{multline*}
\end{theorem}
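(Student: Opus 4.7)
The approach exploits Theorem~\ref{Generals}, which gives $(1-q^L) G_{L,3}(q) = H_{L,3,L}(q)$. Setting $g_n := [q^n] G_{L,3}(q)$ and $h_n := [q^n] H_{L,3,L}(q)$ (with $g_m = 0$ for $m<0$), this yields the coefficient recursion $g_n = g_{n-L} + h_n$. Write $c_L(q)$ for the correction polynomial prescribed by the theorem for the given range of $L$, and let $D_L := \deg c_L(q)$. The plan is three-fold: (i) verify by direct enumeration that $g_n + [q^n]c_L(q) \geq 0$ for $n \leq D_L$; (ii) verify that $g_n \geq 0$ in the intermediate window $D_L < n \leq D_L + L$; and (iii) conclude via induction on $n$ that $g_n \geq 0$ for all $n > D_L + L$.

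Stage~(i) is a finite computation to be carried out separately for each of the seven cases $L \in \{4,5,6,7,8,9\}$ and uniformly for $L \geq 10$. The coefficients $g_n$ can be obtained either combinatorially from \eqref{GLsq}, by enumerating the two partition families, or symbolically by series-expanding the closed form $H_{L,3,L}(q)/(1-q^L)$. The specific correction polynomials listed in the theorem are precisely those that cancel all negative initial coefficients; the computation confirms their sufficiency. Stage~(ii) is again finite, since $D_L + L$ depends only on $L$. Using $g_n = g_{n-L} + h_n$, it suffices in this window to check that, whenever $[q^{n-L}]c_L(q) > 0$, we have $h_n \geq [q^{n-L}]c_L(q)$; otherwise stage~(i) already gives $g_{n-L} \geq 0$ and it is enough to know $h_n \geq 0$.

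Stage~(iii) requires $h_n \geq 0$ for all $n > D_L + L$: this is where a sharpened positivity threshold for $H_{L,3,L}(q)$ enters. Although Theorem~\ref{Genk} supplies such a threshold $\Gamma(3)$ from \eqref{deltas}, this value is astronomical because it is designed to hold uniformly in $s$, $L$, and $k \geq s+1$. The plan is to revisit the combinatorial injection of \cite{BR20} that underlies Theorem~\ref{Genk} and specialize it to the regime $s = 3$, $k = L$ (so that the extra part multiplicity in $H$ matches the range length). The goal is to extract an explicit, tractable threshold $M_L$, ideally bounded by $D_L + L$ or at worst a low-degree polynomial in $L$, beyond which $h_n \geq 0$; then the induction in stage~(iii) closes, since for $n > D_L + L$ the correction polynomial no longer enters, $g_{n-L} \geq 0$ by induction, and $h_n \geq 0$ by the sharpened bound.

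The main obstacle is the last point: extracting a usable positivity threshold for $h_n$ in the regime $s = 3$, $k = L$. The injection of \cite{BR20} carries substantial slack when $s$ is small, but quantifying that slack precisely requires a delicate reworking of the original casework, keeping track of exactly which target partitions become unreachable as the range length changes. The narrowness of the part range is most severe for small $L$ — most acutely $L = 4$, whose correction polynomial contains nearly thirty terms — because many potential injection targets become unavailable and produce additional exceptional partitions; these exceptions are exactly the terms appearing in the longer correction polynomials in the theorem statement, and verifying that no further negative coefficients slip through the intermediate window in stage~(ii) is the bulk of the computational work.
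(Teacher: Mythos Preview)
Your plan is essentially the paper's own proof: it too uses Theorem~\ref{Generals} to reduce to $H_{L,3,L}(q)$, proves tailored injections (Lemmas~\ref{Helpful2}--\ref{Five}) giving nonnegativity of $[q^N]H_{L,3,L}(q)$ past an explicit threshold, and finishes by machine computation below that threshold combined with the sum $b_{L,N}=\sum_{m\equiv N\,(\mathrm{mod}\,L)}a_{L,m}$, which is your recursion unrolled. One point of calibration: the thresholds the paper actually obtains are $21$ for $L\ge22$ (with strict \emph{positivity}, needed so the lone $-1$ among the small $a_{L,m}$ is cancelled in the sum), $N_L=L^2+10L+7$ for $7\le L\le21$, and $67,\,164,\,1042$ for $L=6,5,4$ respectively --- well beyond $D_L+L$ in the small-$L$ cases --- so your stage~(ii) window must in practice be extended to the injection threshold $M_L$ before the induction of stage~(iii) can begin.
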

We point out that the bound in Theorem \ref{GLs} is likely far from optimal.  Take, for
example, the case $s=3$.  According to Theorem \ref{GLs}, the coefficient of $q^n$ in
$G_{L,3}(q)$ is nonnegative whenever $n \geq \delta'(3)$, where $\delta'(3)$ is
is extremely large.  However, from
Theorem \ref{GLthree}, the coefficient of $q^n$ in $G_{L, 3}(q)$ is
nonnegative whenever $n \geq 45$.   This suggests that the bound in Theorem
\ref{GLs} can be improved greatly. 

Theorems \ref{Generals}, \ref{GLs} and \ref{GLthree} can also be found in the
PhD. thesis of the first author \cite{binner}.

\section{Proofs of Theorems \ref{Generals} and \ref{GLs}}
\label{PGLs}

We begin by proving Theorem \ref{Generals}. 

\begin{proof}[Proof of Theorem \ref{Generals}]

The definition of $G_{L,s}(q)$ in \eqref{GLsq} is given as the difference of 
two generating series.  We begin by finding a rational expression for the first 
generating series.  All the partitions counted by this generating series have 
smallest part equal to $s$ and largest part at most $L+s$. 
Hence we obtain for the first generating series the expression
\begin{equation}
\label{8}
 \sum_{\substack{\pi \in \mho, \\ s(\pi)=s, \\ l(\pi)-s(\pi)  \leq L}} q^{|\pi|} = \frac{q^s}{(1-q^s)(1-q^{s+1}) \cdots (1-q^{L+s})}  = \frac{q^s}{(q^s;q)_{L+1}}.
 \end{equation}
  
 For the second generating series in the definition of $G_{L,s}(q)$, we fix the
  number of parts of the partition to be $n$ and then sum over all $n$. Suppose
  $\pi$ is a partition with $n$ parts, where each part is at least $s+1$. Then, in the Ferrers diagram of $\pi$, the whole column over the smallest part of $\pi$ is generated by the $q$-factor $$ \frac{q^{(s+1)n}}{1-q^n}. $$
Stripping the columns above the smallest part from the far left of the Ferrers 
diagram of $\pi$, we are left with a new partition that has at most $n-1$ parts 
and largest part bounded above by $L$.  It is well known 
(see for example \cite[Proposition 1.1]{Aigner}) that these partitions 
are generated by the $q$-binomial coefficient 
$$\left[\myatop{L+n-1}{n-1} \right]_q :=  
\frac{(q;q)_{L+n-1}}{(q;q)_L (q;q)_{n-1}}.$$ 
Thus, for the second generating series in the definition of $G_{L,s}(q)$, we have 
$$ \sum_{\substack{\pi \in \mho, \\ s(\pi) \geq s+1, \\ l(\pi)-s(\pi) \leq L}} 
q^{|\pi|} =  \sum_{n=1}^{\infty}\frac{q^{(s+1)n}}{1-q^n}
\left[\myatop{L+n-1}{n-1} \right]_q.$$ 
Simplifying the summands on the right hand side, we find 
\begin{align*}
	\frac{1}{1-q^n} \left[\myatop{L+n-1}{n-1} \right]_q  &= \frac{1}{1-q^L}
	\left[\myatop{L+n-1}{n} \right]_q \\
 &= \frac{1}{1-q^L} \frac{(q^L;q)_n}{(q;q)_n}.
 \end{align*}
 Therefore
 \begin{align}
  \sum_{\substack{\pi \in \mho, \\ s(\pi) \geq s+1, \\ l(\pi)-s(\pi) \leq L}} q^{|\pi|} &= 
\frac{1}{1-q^L}  \sum_{n=1}^{\infty} q^{(s+1)n}  \frac{(q^L;q)_n}{(q;q)_n}\notag\\
  &=  \frac{1}{1-q^L} \left( -1 + \sum_{n=0}^{\infty} q^{(s+1)n}  
\frac{(q^L;q)_n}{(q;q)_n}  \right)\notag\\
  &=  \frac{1}{1-q^L} \left( \frac{1}{(q^{s+1};q)_L} - 1 \right),\label{16} 
  \end{align}
  where the last step follows from the $q$-binomial theorem (see 
\cite[$(2.1)$]{BerkovichAlexander2017SEPI}) 
$$\sum_{n=0}^{\infty} 
\frac{(a;q)_n}{(q;q)_n} z^n = \frac{(az;q)_{\infty}}{(z;q)_{\infty}}$$ 
with $a=q^L$ and $z=q^{s+1}$.
  Substituting \eqref{8} and \eqref{16} into the definition of $G_{L,s}(q)$ gives us
  \begin{align*}
  G_{L,s}(q) &= \frac{q^s}{(q^s;q)_{L+1}} -  \frac{1}{1-q^L} \left( \frac{1}{(q^{s+1};q)_L} - 1 \right) \\
  &= \frac{1}{1-q^L} \left( \frac{q^s (1-q^L)}{(q^s;q)_{L+1}} -  \left( \frac{1}{(q^{s+1};q)_L} - 1 \right)  \right) \\
  &=  \frac{1}{1-q^L} H_{L,s,L}(q),
  \end{align*}
  as required.
\end{proof}

Theorem \ref{Generals} expresses $G_{L,s}(q)$ in terms of $H_{L,s,L}(q)$, while 
Theorem \ref{Genk} gives the explicit bound $\Gamma(s)$ after which coefficients in the 
series $H_{L,s,L}(q)$ are positive.  We use these to show nonnegativity
properties of $G_{L,s}(q)$.  We first show in Theorem \ref{thm:proofconj5} 
that there is a bound $M$, which depends only on $L$ and $s$, such that the coefficient 
of $q^n$ in $G_{L,s}(q)$ is nonnegative whenever $n \geq M$.

For positive integers $s$ and $L \geq s+1$, let $H_{L,s,L}(q) 
= \sum_{n \geq 0} a_{L,n}q^n$ and $G_{L,s}(q) = \sum_{n \geq 0} b_{L,n}q^n$. Then 
Theorem \ref{Generals} implies
\begin{equation}
\label{HtoG}
 b_{L,n} = a_{L,n} + a_{L,n-L} + a_{L,n-2L} + \cdots = \sum_{\substack{m \leq n \\ m \equiv n (\text{mod} L)}} a_{L,m}.
 \end{equation}
 We introduce some more notation:
\begin{itemize}
\item $\eta_1(L,s) = \sum_{n < \Gamma(s)} |a_{L,n}|$;
\item $\eta_2(L,s) = \max(\eta_1(L,s), \Gamma(s))$; and
\item $\eta_3(L,s) = (L+1) \eta_2(L,s)$.
\end{itemize} 
\begin{theorem}
 \label{thm:proofconj5}
    Let $s$ and $L \geq s+1$ be positive integers.  Then the coefficient of $q^n$ 
in $G_{L,s}(q)$ is nonnegative whenever $n \geq \eta_3(L,s)$. 
        \end{theorem}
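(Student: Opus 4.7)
The plan is to bootstrap from Theorem \ref{Genk} using the identity in \eqref{HtoG}. Since $G_{L,s}(q)=H_{L,s,L}(q)/(1-q^L)$ and $G_{L,s}(q)$ visibly has integer coefficients (it is the difference of two partition generating series), the series $H_{L,s,L}(q)$ also has integer coefficients, so each $a_{L,m}$ is an integer. Combined with Theorem \ref{Genk}, this upgrades the strict positivity of $a_{L,m}$ for $m\geq \Gamma(s)$ to the bound $a_{L,m}\geq 1$. This integrality observation is what makes the counting argument below go through.

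Starting from \eqref{HtoG}, I would split the sum defining $b_{L,n}$ according to whether the index $m$ falls below or above $\Gamma(s)$:
\begin{equation*}
b_{L,n} \;=\; \sum_{\substack{m<\Gamma(s)\\ m\equiv n\,(\mathrm{mod}\,L)}} a_{L,m}\;+\;\sum_{\substack{\Gamma(s)\leq m\leq n\\ m\equiv n\,(\mathrm{mod}\,L)}} a_{L,m}.
\end{equation*}
The first sum is bounded below by $-\eta_1(L,s)$ simply from the definition of $\eta_1(L,s)$ as the total absolute value of the potentially negative part of $H_{L,s,L}(q)$.

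For the second sum, the terms in the arithmetic progression $\{n,\,n-L,\,n-2L,\ldots\}$ that lie in $[\Gamma(s),n]$ number exactly $\lfloor(n-\Gamma(s))/L\rfloor+1$. When $n\geq \eta_3(L,s)=(L+1)\eta_2(L,s)$, we have
\begin{equation*}
\frac{n-\Gamma(s)}{L}\;\geq\;\frac{(L+1)\eta_2(L,s)-\eta_2(L,s)}{L}\;=\;\eta_2(L,s),
\end{equation*}
using that $\eta_2(L,s)\geq \Gamma(s)$ by definition. Thus the second sum has at least $\eta_2(L,s)+1$ terms, each of which is an integer $\geq 1$ by Theorem \ref{Genk}, so it is at least $\eta_2(L,s)+1\geq \eta_1(L,s)+1$. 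Adding the two estimates yields $b_{L,n}\geq 1>0$, proving the claim.

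There isn't really a hard step here: once one commits to Theorem \ref{Genk} plus integrality, the argument is pigeonhole-style counting along the residue class $n\pmod{L}$. The only thing to be careful about is choosing the threshold large enough that the number of guaranteed-positive terms in the progression swamps the worst-case contribution of the finitely many potentially negative terms; the factor of $L+1$ in $\eta_3(L,s)$ is precisely calibrated for this, which is why the definitions of $\eta_1,\eta_2,\eta_3$ take the form they do.
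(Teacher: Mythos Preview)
Your proof is correct and follows essentially the same approach as the paper's: both bound the tail sum in \eqref{HtoG} below by counting the number of terms with $m\geq\Gamma(s)$ in the residue class, using Theorem~\ref{Genk} plus integrality to get each such term $\geq 1$, and then absorb the finitely many potentially negative terms via $\eta_1(L,s)$. The only cosmetic difference is that the paper splits the sum into three ranges (at $\Gamma(s)$ and at $\eta_2(L,s)$) rather than two, and your explicit remark about integrality of the $a_{L,m}$ is a point the paper uses implicitly without stating.
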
  

\begin{proof}
Suppose $n \geq \eta_3(L,s)$. We can rewrite \eqref{HtoG} as  
\begin{equation}
\label{rewrite}
b_{L,n} = \sum_{\substack{\eta_2(L,s) \leq m \leq n \\ m \equiv n (\text{mod} L)}} a_{L,m} +  \sum_{\substack{\Gamma(s) \leq m < \eta_2(L,s) \\ m \equiv n (\text{mod} L)}} a_{L,m} + \sum_{\substack{m < \Gamma(s) \\ m \equiv n (\text{mod} L)}} a_{L,m}.
\end{equation}
Note that the second sum may be empty.  Since $n \geq \eta_3(L, s)$, the first sum on the 
right hand side of \eqref{rewrite} contains at least $\eta_2(L,s)$ terms, all of which 
are positive by Theorem \ref{Genk}. Thus
\begin{equation}
\label{first3}
 \sum_{\substack{\eta_2(L,s) \leq m \leq n \\ m \equiv n (\text{mod} L)}} a_{L,m}  \geq \eta_2(L,s). 
 \end{equation}
For the second sum in the right hand side of \eqref{rewrite}, Theorem \ref{Genk}
gives
\begin{equation}
\label{second2}
 \sum_{\substack{\Gamma(s) \leq m < \eta_2(L,s) \\ m \equiv n (\text{mod} L)}} a_{L,m} \geq 0.
\end{equation}
For the third sum in the right hand side of \eqref{rewrite}, using the triangle inequality, we obtain 
$$  \left| \sum_{\substack{m < \Gamma(s) \\ m \equiv n (\text{mod} L)}} a_{L,m}  \right| \leq \sum_{\substack{m < \Gamma(s) \\ m \equiv n (\text{mod} L)}} |a_{L,m}| \leq \sum_{m < \Gamma(s)} |a_{L,m}|  = \eta_1(L,s) \leq \eta_2(L,s),  $$ and thus 
\begin{equation}
\label{third} 
 \sum_{\substack{m < \Gamma(s) \\ m \equiv n (\text{mod} L)}} a_{L,m}  \geq -\eta_2(L,s). 
\end{equation}
The result now follows immediately from \eqref{rewrite}, \eqref{first3}, \eqref{second2} 
and \eqref{third}. 
\end{proof}

The bound $\eta_3(L,s)$ in Theorem \ref{thm:proofconj5} guaranteeing when the coefficients of $G_{L,s}(q)$ are nonnegative depends on both $L$ and $s$.   To prove Theorem \ref{GLs}, we need to 
find a bound that only depends on $s$, and this is our next goal.   We again use
the connection between $G_{L,s}(q)$ and $H_{L,s,L}(q)$ in Theorem
\ref{Generals};  while Theorem
\ref{Genk} guarantees the series $H_{L,s,L}(q)$ is eventually positive,
we also need a lower bound on the size of the coefficients of $H_{L,s,L}(q)$.  This is the content of Theorem \ref{ModiGenk} below, a
strengthening of Theorem \ref{Genk} in the case $k=L$. 

Let
\begin{itemize}
	\item $D_{L,s}$  denote the set of nonempty partitions
		with parts in the set $\{s+1,  \ldots, L+s\}$, and  
	\item $I_{L,s,L}$ be the set of partitions where the smallest part is $s$, all parts are 
$\leq L+s$, and $L$ does not appear as a part.
\end{itemize}  
From the definition of the series $H_{L,s,L}(q)$ in \eqref{eq:hlsk}, when $L
\geq s+1$, elementary partition theory gives the coefficient of $q^N$ in $H_{L,s,L}(q)$ as
the difference
\begin{equation}\label{eq:comb}
 |\{\pi \in I_{L,s,L} : |\pi| = N|\}|  - |\{\pi \in D_{L,s} : |\pi| = N\}|.
\end{equation}
Thus to prove nonnegativity of the coefficient of $q^N$ in $H_{L,s,L}(q)$, it 
suffices to show there exists an injection $\phi$ such that
\begin{equation}\label{eq:gamma}
	\phi: \{\pi \in D_{L,s} : |\pi| = N\} \rightarrow \{\pi \in I_{L,s,L} : |\pi|
	= N\}
\end{equation}
Equations \eqref{eq:comb} and \eqref{eq:gamma} are central to proving our
theorems below in this section and the next.  We also need the following result.
\begin{prop}\label{thm:prop1}
    For given positive integers $a,b$ and $n$ with $\gcd(a,b)=1$, the 
number 
of solutions of $ax+by=n$ in nonnegative integer pairs $(x,y)$ is either $\lfloor 
\frac{n}{ab} \rfloor$ or $\lfloor \frac{n}{ab} \rfloor + 1$.
\end{prop}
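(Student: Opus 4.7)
The plan is to parametrize all integer solutions of $ax+by=n$ and then recast the count as counting integers in a real interval whose length is exactly $n/(ab)$. Since $\gcd(a,b)=1$, B\'ezout's identity guarantees the existence of at least one (possibly negative) integer solution $(x_0,y_0)$ to $ax+by=n$, and a standard argument shows that every integer solution takes the form $(x_0+bt,\,y_0-at)$ for some $t\in\mathbb{Z}$.

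The next step is to impose nonnegativity. The conditions $x_0+bt\geq 0$ and $y_0-at\geq 0$ combine into the single double inequality
$$-\frac{x_0}{b}\;\leq\;t\;\leq\;\frac{y_0}{a},$$
so the number of nonnegative integer solutions equals the number of integers $t$ in the closed real interval $[-x_0/b,\,y_0/a]$. A direct calculation shows that the length of this interval is
$$\frac{y_0}{a}+\frac{x_0}{b}\;=\;\frac{ax_0+by_0}{ab}\;=\;\frac{n}{ab}.$$
I then invoke the elementary fact that a closed real interval of length $L\geq 0$ contains either $\lfloor L\rfloor$ or $\lfloor L\rfloor+1$ integers, which immediately yields the claim.

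I do not expect a substantive obstacle in this argument; the only bookkeeping point is the degenerate case in which no nonnegative solution exists at all. In that situation the count is $0$, but then necessarily $n<ab$, so $\lfloor n/(ab)\rfloor=0$ and the stated bound still holds with the lower option. The whole argument is essentially a one-dimensional lattice-counting observation, which is appropriate given that Proposition \ref{thm:prop1} serves as a technical lemma for the more substantial theorems later in the paper.
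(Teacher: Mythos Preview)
Your argument is correct and is the standard elementary proof of this classical fact: parametrize the integer solutions via B\'ezout, reduce to counting lattice points in an interval of length $n/(ab)$, and invoke the observation that any closed real interval of length $L$ contains either $\lfloor L\rfloor$ or $\lfloor L\rfloor+1$ integers. Your handling of the degenerate case is also fine, though in fact it is already subsumed by the interval argument: if the interval contains no integers then its length must be strictly less than $1$, forcing $\lfloor n/(ab)\rfloor=0$.

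As for comparison with the paper: the paper does not actually supply a proof of this proposition. It simply cites external references (Tripathi and Niven--Zuckerman--Montgomery) for it. Your write-up is essentially the textbook argument one would expect to find in those sources, so there is nothing to distinguish methodologically.
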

See \cite{AT}.   See also \cite[Chapter 5]{niven} for an elementary proof.

 \begin{theorem}
\label{ModiGenk}
  For positive integers $L \geq 3$ and $s$, with $L \geq s+1$, the coefficient of
  $q^N$ in $H_{L,s,L}(q)$ is greater than or equal to $\left \lfloor \frac{N-10s}{(s+2)(s+3)} \right \rfloor$ whenever $N \geq \Gamma(s)$.  
   \end{theorem}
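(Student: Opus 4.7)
The plan is to leverage the combinatorial interpretation in \eqref{eq:comb} together with the injection $\phi$ from \cite{BR20} that underlies the proof of Theorem \ref{Genk}. Since the coefficient of $q^N$ in $H_{L,s,L}(q)$ equals $|\{\pi \in I_{L,s,L} : |\pi|=N\}| - |\{\pi \in D_{L,s} : |\pi|=N\}|$, and since $\phi$ maps the second set injectively into the first whenever $N \geq \Gamma(s)$, it suffices to exhibit at least $\lfloor (N-10s)/((s+2)(s+3)) \rfloor$ partitions in $I_{L,s,L}$ of weight $N$ that are certifiably \emph{not} in the image of $\phi$.

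The natural candidate family is
$$F_N := \bigl\{(s,(s+2)^a,(s+3)^b) : a,b \geq 0,\ a(s+2)+b(s+3) = N-s\bigr\}.$$
Every element of $F_N$ has smallest part $s$, has largest part at most $s+3 \leq L+s$ (using $L \geq s+1$), and, since $L \geq s+1$, avoids the part $L$ except in the boundary cases $L \in \{s+2,s+3\}$. In these boundary cases I would replace one of the exponents by a translation argument (e.g., absorb a block of size $L$ into a combination of other parts) so that the modified family still lies in $I_{L,s,L}$ and its cardinality changes only by an $O(1)$ amount. Because $\gcd(s+2,s+3)=1$, Proposition \ref{thm:prop1} then guarantees
$$|F_N| \geq \left\lfloor \frac{N-s}{(s+2)(s+3)} \right\rfloor.$$

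The next step is to verify that at most a bounded (in $s$) number of members of $F_N$ can lie in $\phi(D_{L,s}^N)$. This is where one must unpack the specific construction of $\phi$ given in \cite{BR20}: the image partitions are produced by a procedure that records a distinguished ``residual'' block coming from an input partition in $D_{L,s}$, and the support $\{s,s+2,s+3\}$ of members of $F_N$ is too restrictive to accommodate this residual block except in a few degenerate situations whose number depends only on $s$. Bounding this exceptional contribution by a function linear in $s$ and absorbing it into the shift from $N-s$ down to $N-10s$ in the floor would complete the argument.

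I expect the main obstacle to be precisely this last verification: tracking the structural invariants of the image of $\phi$ well enough to certify that at most $O(s)$ many members of $F_N$ can possibly be hit, and then checking that the gap between $\lfloor (N-s)/((s+2)(s+3)) \rfloor$ and $\lfloor (N-10s)/((s+2)(s+3)) \rfloor$ is large enough to absorb these exceptions uniformly in $L$. Once this bookkeeping is in place, the bound follows by combining \eqref{eq:comb}, the injection $\phi$, and the count of $F_N$ from Proposition \ref{thm:prop1}.
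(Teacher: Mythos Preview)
Your framework is the same as the paper's---use \eqref{eq:comb}, the injection $\phi$ from \cite{BR20}, and count partitions in $I_{L,s,L}$ that lie outside the image of $\phi$---but the execution has a real gap at exactly the point you flag as ``the main obstacle.''

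The paper does \emph{not} attempt to introduce a single universal family and then argue that most of it misses the image of $\phi$. Instead, it exploits the fact that the proof of Theorem~\ref{Genk} in \cite{BR20} already exhibits, \emph{case by case in $L$ and $s$}, an explicit shape of partition guaranteed not to be in the range of $\phi$. These shapes differ across cases: for $L \geq 2s+3$ they are $(s^{10},(s+1)^x,(s+2)^y)$; for $s+3 \leq L \leq 2s+2$ they are $(s^1,(s+1)^x,(s+2)^y)$; for $L=s+1$ they are $(s^1,(s+2)^x,(s+3)^y)$; and there are further subcases for $L=s+2$ depending on the parity of $s$ and $N$. The paper simply counts each of these known families with Proposition~\ref{thm:prop1} and observes that every count dominates $\lfloor (N-10s)/((s+2)(s+3)) \rfloor$. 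No new verification against $\phi$ is needed because \cite{BR20} already did that work.

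Your single family $F_N=\{(s,(s+2)^a,(s+3)^b)\}$ coincides with the paper's family only when $L=s+1$. For other values of $L$ there is no reason to expect that $F_N$ avoids the image of $\phi$ up to $O(s)$ exceptions; the fact that \cite{BR20} uses genuinely different witness families in different $L$-regimes suggests this is not a mere convenience but reflects how the image of $\phi$ actually varies with $L$. Your ``translation argument'' for the boundary cases $L\in\{s+2,s+3\}$ and the claimed $O(s)$ bound on exceptions are both unverified, and carrying them out would essentially require redoing the case analysis of \cite{BR20}. The cleaner route, which the paper takes, is to read off the already-certified non-image families from \cite{BR20} and count each one.
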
 
  \begin{proof}
	  The proof of Theorem \ref{Genk}, found in \cite{BR20}, uses the
	  combinatorial interpretation of the coefficients of $H_{L,s,L}(q)$ in
	  \eqref{eq:comb}; the proof there constructs for $N \geq \Gamma(s)$ an injection
	  as in \eqref{eq:gamma} to show nonnegativity of \eqref{eq:comb}.  To show positivity of
\eqref{eq:comb}, elements of the codomain that are not in the range of $\phi$
are given.  The details of these injections and elements are dealt with in different cases and theorems of \cite{BR20} depending
on the relative sizes of $L$ and $s$.\footnote{As indicated by Theorem \ref{Genk}, the proof in \cite{BR20} for the positivity of $H_{L,s,k}(q)$ applies for all $k \geq
	s+1$.  In particular, a more general $I_{L,s,k}$ is defined than the one
given in \eqref{eq:comb}.  Here we are only interested in the case $k=L$.} For example, when $L \geq
2s+3$, there is no partition of the form $(s^{10}, (s+1)^x, (s+2)^y)$ in the
range of $\phi$, but such partitions are in the codomain.   To achieve the
present result, we count the number of such partitions;  this will then give the
desired lower bound for the difference \eqref{eq:comb} in each case.   We then
compare the results of all cases.

Table \ref{tab:casesbound} lists the cases, the theorem from \cite{BR20}
showing positivity of \eqref{eq:comb}, the partitions in the codomain but
not the range of $\phi$ showing positivity of \eqref{eq:comb}, and the
enumeration
of these partitions.
\begin{table}[htbp]
	\centering
	\caption{The first column describes the case, the second the theorem from
		\cite{BR20} that proves positivity in this case, the third the
		partitions in the codomain that are not in the range of $\phi$ in
		\eqref{eq:gamma}, and the last column contains the number of partitions
	of the type in Column 3}.  
	\label{tab:casesbound}
	\begin{tabular}{|c|c|c|c|c|}
		\hline
		& Case of $L, s$ &  Theorem & Partitions in codomain & min.
		num,\\
		& & of \cite{BR20}   & not in range of $\phi$ & of partitions\\
		\hline
		1 & $L \geq 2s+3$ & 10 & $(s^{10},(s+1)^x,(s+2)^y)$ & \rule{0pt}{18pt} $\left\lfloor
		\frac{N-10s}{(s+1)(s+2)} \right\rfloor$ \\
		[+0.5em]
		\hline
			2 & $s+3 \leq L \leq 2s+2$ & 12 & $(s^{1},(s+1)^x,(s+2)^y)$ & \rule{0pt}{18pt} $\left \lfloor
		\frac{N-s}{(s+1)(s+2)} \right \rfloor$\\
		[+0.5em]
		\hline
		3 & $L = s+1$ & 12 & $(s^{1},(s+2)^x,(s+3)^y)$ &\rule{0pt}{18pt} $\left \lfloor
		\frac{N-s}{(s+2)(s+3)} \right \rfloor$\\
		[+0.5em]
		\hline 
		4 & $s$ even,  $L=s+2$  & 12 & $(s^{1},(s+1)^x,(s+3)^y)$ & \rule{0pt}{18pt} $\left \lfloor
		\frac{N-s}{(s+1)(s+3)} \right \rfloor$\\
		[+0.5em]
		\hline
		5 & $s$ odd,  $L=s+2$, $N$ odd & 12 &  $(s^{1},(s+1)^x,(s+3)^y)$ & \rule{0pt}{18pt} $\left
	\lfloor \frac{2(N-s)}{(s+1)(s+3)} \right \rfloor$\\
		[+0.5em]
	\hline
		6 & $s$ odd,  $L=s+2$, $N$ even, $s \neq 1$ & 12 & $(s^{2},(s+1)^x,(s+3)^y)$ &
		\rule{0pt}{18pt} $\left \lfloor \frac{2(N-2s)}{(s+1)(s+3)} \right \rfloor$\\
		[+0.5em]
	\hline
		7 & $s =1$,  $L=3$, $N$ even & 12 & $(1^{6},2^x,4^y)$ &
		\rule{0pt}{18pt} $\left \lfloor \frac{(N-6)}{4} \right \rfloor$\\
	[+0.5em]
	 \hline
	\end{tabular}
\end{table}
To count the number of partitions in Column 3 in Table \ref{tab:casesbound}, in
Rows 1-4 
a straight forward application of Proposition \ref{thm:prop1} to $n = N - ts$, where $t$ is the
number of parts of $s$, and $a$ and $b$ set to be the remaining parts, which are
coprime, gives the numbers in Column 4.

The remaining rows require a slightly more work.  For Row 5, the numbers $s+1$
and $s+3$ have greatest common factor 2, so we can apply Proposition
\ref{thm:prop1} to 
$n=\tfrac{N-s}{2}, a=\tfrac{s+1}{2}$ and $b=\tfrac{s+3}{2}$.   The count in
Column 4 then follows.  The analysis for Rows 6 and 7 are similar.

The result is now obtained by observing that all the values in Column 4 exceed $\left \lfloor \frac{N-10s}{(s+2)(s+3)} \right \rfloor$.

  \end{proof}

For a positive integer $m$, let $p(m)$
be the number of partitions of $m$.  We need the following result of de Azevedo
Pribitkin \cite{Prib}. 
\begin{theorem}\label{thm:prop2}
    Let $m$ be a positive integer.  Then $p(m) \leq e^{3\sqrt{m}}$.  
\end{theorem}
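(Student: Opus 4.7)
The plan is to deduce the bound from Euler's generating identity
\[
\sum_{n \geq 0} p(n) q^n = \prod_{k=1}^\infty (1-q^k)^{-1}
\]
by a saddle-point style argument.  Since every coefficient on the left is nonnegative, for any $q \in (0,1)$ we have $p(m) q^m \leq \prod_{k=1}^\infty (1-q^k)^{-1}$, so taking logarithms gives
\[
\log p(m) \leq -m\log q + \sum_{k=1}^\infty -\log(1-q^k).
\]
The strategy is to upper bound the tail sum by something explicit and then pick $q$ close to $1$ to balance the two terms on the right.

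The main calculation is to control the sum.  I would expand $-\log(1-q^k) = \sum_{j \geq 1} q^{jk}/j$, interchange the order of summation to rewrite
\[
\sum_{k=1}^\infty -\log(1-q^k) = \sum_{j=1}^\infty \frac{q^j}{j(1-q^j)},
\]
and then use the identity $1-q^j = (1-q)(1+q+\cdots+q^{j-1})$ together with the fact that each summand in the second factor is at least $q^{j-1}$ when $0 < q < 1$ to get $1 - q^j \geq j(1-q)q^{j-1}$.  This yields the clean estimate
\[
\sum_{k=1}^\infty -\log(1-q^k) \leq \frac{q}{1-q}\sum_{j=1}^\infty \frac{1}{j^2} = \frac{\pi^2 q}{6(1-q)}.
\]

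To optimize, I would substitute $q = e^{-t}$ with $t > 0$.  The inequality $1 - e^{-t} \geq te^{-t}$, which vanishes at $t = 0$ and has nonnegative derivative $te^{-t}$, implies $q/(1-q) \leq 1/t$, so
\[
\log p(m) \leq mt + \frac{\pi^2}{6t}.
\]
The optimal choice $t = \pi/\sqrt{6m}$ makes the right-hand side equal to $\pi\sqrt{2m/3}$, and since $\pi\sqrt{2/3} < 3$ the desired inequality $p(m) \leq e^{3\sqrt{m}}$ follows immediately.

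The only point that requires attention is that $q = e^{-t}$ must lie in $(0,1)$, which is automatic for $t > 0$ and in particular at the optimum for every $m \geq 1$.  Consequently the argument is uniform in $m$ and no separate treatment of small values is needed, so I do not anticipate a serious obstacle in carrying out this program; the only real ingredient beyond bookkeeping is the slack between the true Hardy--Ramanujan constant $\pi\sqrt{2/3} \approx 2.565$ and the target constant $3$, which comfortably absorbs any looseness in the bound on $\sum_{k}-\log(1-q^k)$.
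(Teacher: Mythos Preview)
Your proof is correct and complete. The paper does not actually prove this statement; it simply cites it as a result of de~Azevedo Pribitkin, so there is no proof in the paper to compare against. What you have written is the classical generating-function (Chebyshev-type) argument that underlies essentially all elementary upper bounds for $p(m)$, and it cleanly delivers the constant $\pi\sqrt{2/3}<3$ with room to spare. Your handling of the only potentially delicate points---the interchange of summation, the inequality $1-q^{j}\ge j(1-q)q^{j-1}$, and the inequality $1-e^{-t}\ge te^{-t}$---is accurate, so the argument goes through for every $m\ge 1$ without exception.
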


\begin{proof}[Proof of Theorem \ref{GLs}]

	As before, let $a_{L,n}$ and $b_{L,n}$ be the coefficient of $q^n$ in
	$H_{L,s,L}(q)$ and $G_{L,s}(q)$, respectively.
Further, recall the definitions of $\Gamma(s), \delta(s),$ and $\delta'(s)$ in
	\eqref{eq:defplsgammals} and $\eqref{deltas}$.

Suppose $n \geq \delta'(s)$.  Again from \eqref{HtoG}, we have 
\begin{equation}
\label{rewrite'}
b_{L,n} = \sum_{\substack{\delta'(s) \leq m \leq n \\ m \equiv n (\text{mod}
L)}} a_{L,m} +  \sum_{\substack{\Gamma(s) \leq m < \delta'(s) \\ m \equiv n
(\text{mod} L)}} a_{L,m} + \sum_{\substack{m < \Gamma(s) \\ m \equiv n
(\text{mod} L)}} a_{L,m}.
\end{equation}
For $m \geq \delta'(s)$, we have $m \geq \Gamma(s)$ and $\lfloor
\frac{m-10s}{(s+2)(s+3)} \rfloor \geq \delta(s)$, so $a_{L,m} \geq \delta(s)$
by Theorem \ref{ModiGenk}.   The first sum in the right hand side of
\eqref{rewrite'} contains at least 1 term (the term $m=n$) and each term in the sum is greater than or equal to $\delta(s)$. Thus
\begin{equation}
\label{first'}
 \sum_{\substack{\delta'(s) \leq m \leq n \\ m \equiv n (\text{mod} L)}} a_{L,m}  \geq \delta(s). 
 \end{equation}
For the second sum in the right hand side of \eqref{rewrite'}, from Theorem
\ref{Genk} it follows
\begin{equation}
\label{second'}
 \sum_{\substack{\Gamma(s) \leq m < \delta'(s) \\ m \equiv n (\text{mod} L)}} a_{L,m}  \geq 0.
\end{equation}
For the third sum on the right hand side of \eqref{rewrite'}, the
combinatorial interpretation of $H_{L,s,L}(q)$ in \eqref{eq:comb} gives $a_{L,m}
\geq - p(m)$ for any $m \in \mathbb{N}$.  By Theorem \ref{thm:prop2}, we then have $$a_{L,m} \geq - p(m) \geq -e^{3\sqrt{m}} \geq -e^{3m}.$$ Therefore
\begin{equation}
\label{third'} 
 \sum_{\substack{m < \Gamma(s) \\ m \equiv n (\text{mod} L)}} a_{L,m}  \geq -\sum_{\substack{m < \Gamma(s) \\ m \equiv n (\text{mod} L)}} e^{3m}  \geq -\sum_{m < \Gamma(s)} e^{3m} > -\delta(s),
\end{equation}
where the last inequality follows from the familiar formula for finite geometric
sums and the definition of $\delta(s)$.
The theorem now follows immediately from \eqref{rewrite'}, \eqref{first'}, \eqref{second'} and \eqref{third'}. 

\end{proof}

\section{Proof of Theorem \ref{GLthree}}
\label{PGLthree}

To prove Theorem \ref{GLthree}, we use the connection between $G_{L,3}(q)$
and $H_{L, 3, L}(q)$ given in Theorem \ref{Generals}.  To use this relationship,
we need to understand the coefficients of $q^N$ in $H_{L, 3, L}(q)$ for
small $N$, so we need a result, in the case $s=3$ and $L \geq s+1$,
stronger than Theorem \ref{Genk}.  Our strategy
for proving the coefficient of $q^N$ in $H_{L,3,L}(q)$ is nonnegative for
small $N$ is to show that the coefficient of $q^N$ in
$H_{L,3,L}(q)$ is nonnegative when $N$ is larger than a small bound; this is
done in Lemmas \ref{Helpful2} - \ref{Five}.  Then we use the lemmas along with
machine computation and Theorem \ref{Generals} to prove Theorem \ref{GLthree} at
the end of the section.

Recall the following fundamental result of Sylvester \cite{Sylvester82}.

\begin{theorem}[Sylvester's theorem]
	Let $a$ and $b$ be positive coprime integers.  Then the equation $ax+by = n$ has a
	nonnegative integer solution $(x,y)$  whenever $n \geq (a-1)(b-1)$.
\end{theorem}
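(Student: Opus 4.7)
The plan is to prove Sylvester's theorem by the standard residue-class argument: reduce the two-variable question to choosing a representative of the unique appropriate residue class modulo $b$, and then show the complementary variable is forced to be nonnegative by the hypothesis $n \geq (a-1)(b-1)$.

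First, I would exploit coprimality. Since $\gcd(a,b) = 1$, the map $x \mapsto ax \pmod{b}$ is a bijection on $\mathbb{Z}/b\mathbb{Z}$, so there exists a unique $x_0 \in \{0, 1, \ldots, b-1\}$ such that $ax_0 \equiv n \pmod{b}$. Define $y_0 = (n - ax_0)/b$, which is automatically an integer. It remains only to verify that $y_0 \geq 0$ under the assumed lower bound on $n$; equivalently, that $n - ax_0 \geq 0$.

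Second, I would bound $ax_0$ from above using $x_0 \leq b - 1$, giving $ax_0 \leq a(b-1) = ab - a$. Combining this with $n \geq (a-1)(b-1) = ab - a - b + 1$ yields
\[
n - ax_0 \;\geq\; (ab - a - b + 1) - (ab - a) \;=\; -(b-1).
\]
Since $n - ax_0$ is a multiple of $b$ and strictly greater than $-b$, it must in fact be a nonnegative multiple of $b$. Therefore $y_0 \geq 0$, and $(x_0, y_0)$ is the desired nonnegative integer solution of $ax + by = n$.

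The main obstacle is really just to verify that the bound $(a-1)(b-1)$ is exactly tight enough for the argument to go through: one needs the residue-chosen $x_0$ in $\{0, \ldots, b-1\}$ together with the lower bound on $n$ to force the residual quantity $n - ax_0$ into the nonnegative range. The edge case $b = 1$ (where the bound $(a-1)(b-1) = 0$ makes the statement trivial via $(x,y) = (0,n)$) should be noted in passing, but otherwise the argument is uniform and elementary, and does not require induction or generating functions.
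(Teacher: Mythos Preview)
Your proof is correct and is the standard residue-class argument for Sylvester's theorem. There is nothing to compare against: the paper does not prove this statement but simply records it as a classical result of Sylvester (with a citation) and then uses it as a black box to derive Lemmas~\ref{4,5,6}--\ref{4,5,6,7}. Your write-up therefore supplies a self-contained proof where the paper chose to appeal to the literature.
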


We additionally need the following lemmas.

\begin{lemma}
\label{4,5,6}
Let $n \geq 4$ be a positive integer such that $n \neq 7$. Then the equation $4x+5y+6z=n$ has a solution in nonnegative integer triples $(x,y,z)$. 
\end{lemma}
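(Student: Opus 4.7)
The plan is to give a direct case analysis and reduce the problem to Sylvester's theorem (stated just above) for the pair $(4,5)$, together with a short verification of small cases.

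First I would dispose of the range $n \geq 12$ using Sylvester's theorem. Since $\gcd(4,5) = 1$ and $(4-1)(5-1) = 12$, Sylvester guarantees that the equation $4x + 5y = n$ has a nonnegative integer solution for every $n \geq 12$. Setting $z = 0$ in such a solution produces a nonnegative integer triple $(x, y, 0)$ solving $4x + 5y + 6z = n$.

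Next I would handle the small cases $4 \leq n \leq 11$ individually (noting $n = 7$ is excluded by hypothesis). Writing each remaining value of $n$ as a suitable small sum gives:
\begin{align*}
 4 &= 4, & 5 &= 5, & 6 &= 6, & 8 &= 4+4, \\
 9 &= 4+5, & 10 &= 4+6, & 11 &= 5+6,
\end{align*}
which directly furnish the required triples $(x,y,z)$. Together with the previous paragraph, this exhausts every $n \geq 4$ with $n \neq 7$.

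An alternative (and essentially equivalent) organization would be a case split on $n \bmod 4$: for $n \equiv 0, 1, 2, 3 \pmod{4}$, the triples $(n/4, 0, 0)$, $((n-5)/4, 1, 0)$, $((n-6)/4, 0, 1)$, $((n-11)/4, 1, 1)$ work once $n$ is at least $4, 5, 6, 11$ respectively, and the only $n \geq 4$ not covered is $n = 7$. Either way, there is no real obstacle here; the lemma is essentially a finite check combined with Sylvester's theorem, and the only mild subtlety is ensuring the cut-off ($n \geq 12$ via Sylvester, or $n \geq 11$ in the residue-class argument) aligns with the directly verified small cases so that $n = 7$ emerges as the unique exception.
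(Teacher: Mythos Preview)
Your proof is correct and follows essentially the same approach as the paper: apply Sylvester's theorem to the pair $(4,5)$ to handle $n \geq 12$, then verify the remaining small cases $4 \leq n \leq 11$ (with $n \neq 7$) directly.
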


\begin{lemma}
\label{5,6,7}
Let $n \geq 5$ be a positive integer such that $n \neq 8,9$. Then the equation $5x+6y+7z=n$ has a solution in nonnegative integer triples $(x,y,z)$.
\end{lemma}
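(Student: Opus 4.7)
The plan is to combine Sylvester's theorem (stated just above the lemma) with a direct check of small cases. Since $\gcd(5,6) = 1$, Sylvester's theorem guarantees that $5x + 6y = m$ has a nonnegative integer solution whenever $m \geq (5-1)(6-1) = 20$; hence for every $n \geq 20$, setting $z = 0$ yields a nonnegative integer solution of $5x + 6y + 7z = n$. This reduces the lemma to a finite check over $5 \leq n \leq 19$ with $n \notin \{8,9\}$, which is a list of only thirteen values.

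For each of these thirteen base cases I would exhibit a representation by inspection: the values $5, 6, 7$ are themselves generators; the values $10$ through $14$ are realized as sums of two generators (e.g.\ $10 = 5+5$, $14 = 7+7$); and the values $15$ through $19$ are realized as sums of three generators. The only point genuinely requiring comment is that the exclusions $n = 8$ and $n = 9$ really do admit no representation, which is immediate since any sum of at least two terms from $\{5,6,7\}$ is at least $10$, while a single term is at most $7$.

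There is essentially no obstacle here; Sylvester's bound of $20$ conveniently keeps the base-case list short enough to verify without effort. An alternative, slightly slicker presentation would be to observe that once a complete residue system modulo $5$ has been realized (for instance by the five consecutive values $10, 11, 12, 13, 14$), every larger integer is obtained by incrementing the coefficient of the generator $5$, which avoids invoking Sylvester's theorem at all; but using Sylvester's theorem is the cleanest route given that it has already been recorded in the paper.
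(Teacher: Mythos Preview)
Your proposal is correct and matches the paper's own approach exactly: the paper states that the proof is a simple application of Sylvester's theorem together with a direct check of the remaining small cases, precisely as you outline (using $a=5$, $b=6$ to cover $n\geq 20$ and then handling $5\le n\le 19$ individually).
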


\begin{lemma}
\label{4,5,6,7}
Let $n \geq 4$ be a positive integer. Then the equation $4x+5y+6z+7u=n$ has a solution in nonnegative integer tuples $(x,y,z,u)$. 
\end{lemma}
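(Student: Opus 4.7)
The plan is to exploit the fact that $4$ itself appears as one of the coefficients: if $(x, y, z, u)$ is a nonnegative integer solution of $4x + 5y + 6z + 7u = n$, then $(x+1, y, z, u)$ is a nonnegative integer solution for $n + 4$. Consequently, it suffices to produce a base solution for one representative $n$ in each residue class modulo $4$, and then every larger $n$ in that residue class is handled by incrementing $x$ by the appropriate amount.

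For $n \geq 4$, the smallest element in each residue class modulo $4$ is one of $4, 5, 6, 7$, corresponding to residues $0, 1, 2, 3$ respectively. Each of these admits a trivial representation using a single generator: $4 = 4 \cdot 1$, $5 = 5 \cdot 1$, $6 = 6 \cdot 1$, and $7 = 7 \cdot 1$. Thus, given any $n \geq 4$, write $n \equiv r \pmod{4}$ with $r \in \{0, 1, 2, 3\}$, select the base solution $b \in \{4, 5, 6, 7\}$ with $b \equiv r \pmod 4$ and $b \leq n$, and add $(n - b)/4$ to the $x$-coordinate of the base solution.

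There is essentially no obstacle here: the key structural feature is that $\{4, 5, 6, 7\}$ is itself a complete residue system modulo $4$ and contains the generator $4$. This is precisely why the statement holds uniformly for every $n \geq 4$ with no exceptional values to exclude, in contrast with Lemmas \ref{4,5,6} and \ref{5,6,7}, whose generator sets miss one residue class modulo the smallest generator and hence force a handful of small exceptions.
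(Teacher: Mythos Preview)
Your proof is correct. The key observation---that $\{4,5,6,7\}$ forms a complete residue system modulo $4$---immediately reduces the problem to the four trivial base cases $n=4,5,6,7$, after which incrementing $x$ handles all larger $n$ in one stroke.

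The paper takes a slightly different route: it invokes Sylvester's theorem on the coprime pair $a=4$, $b=5$, which guarantees a solution for all $n\geq (4-1)(5-1)=12$, and then checks the remaining values $4\leq n\leq 11$ individually. Your argument is more self-contained for this particular lemma, since it bypasses Sylvester's theorem entirely and requires no case-checking beyond the four trivial representations. The paper's approach, on the other hand, is uniform across Lemmas~\ref{4,5,6}, \ref{5,6,7}, and~\ref{4,5,6,7}: the same Sylvester-plus-small-cases template applies to all three, whereas your residue-class trick exploits the special fact that here the generators happen to cover every class modulo the smallest one---which, as you correctly note, is exactly why this lemma has no exceptional values while the other two do.
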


The proofs of these lemmas are all simple applications of Sylvester's theorem.
For example, the proof of Lemma \ref{4,5,6} can be obtained by applying
Sylvester's theorem to $a=4$ and $b=5$, which establishes the conclusion for all
$n \geq 12$, and then each smaller $n$ can be dealt with individually.  The proofs  
of Lemmas \ref{5,6,7} and \ref{4,5,6,7} are similar, so we omit the details.

We also need another lemma.

\begin{lemma}
\label{twosol}
Suppose the equation $4x+5y+6z=n$ has a solution $(\alpha, \beta, \gamma)$ in nonnegative integer triples. Then the equation $4x+5y+6z=n+6$ has a solution different from $(\alpha, \beta, \gamma+1)$ whenever $n \geq 4$ and $n \neq 5$.
\end{lemma}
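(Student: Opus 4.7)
The plan is to exhibit an alternative solution by adding a nontrivial ``null vector'' $(a,b,c)$ with $4a+5b+6c=0$ to the obvious solution $(\alpha,\beta,\gamma+1)$. Three convenient short vectors in this null lattice are
\begin{equation*}
(-1,\,2,\,-1), \qquad (1,\,-2,\,1), \qquad (3,\,0,\,-2),
\end{equation*}
which, added to $(\alpha,\beta,\gamma+1)$, produce the three candidate triples
\begin{equation*}
(\alpha-1,\,\beta+2,\,\gamma), \qquad (\alpha+1,\,\beta-2,\,\gamma+2), \qquad (\alpha+3,\,\beta,\,\gamma-1).
\end{equation*}
Each differs from $(\alpha,\beta,\gamma+1)$ in either its first or second coordinate, so as soon as one of them has all nonnegative entries it supplies the required alternative solution. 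The nonnegativity of the three candidates is controlled respectively by the conditions $\alpha \geq 1$, $\beta \geq 2$, and $\gamma \geq 1$.

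The remainder is a short case analysis. If $\alpha \geq 1$, use the first candidate; otherwise, if $\beta \geq 2$, use the second; otherwise, if $\gamma \geq 1$, use the third. The only configuration not handled is $\alpha = 0$, $\beta \in \{0,1\}$, and $\gamma = 0$, in which case $n = 5\beta \in \{0,5\}$, precisely the values ruled out by the hypotheses $n \geq 4$ and $n \neq 5$.

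I do not anticipate any real obstacle; the only subtlety is selecting null vectors short enough that the three sufficient conditions $\alpha \geq 1$, $\beta \geq 2$, $\gamma \geq 1$ together leave as the sole unresolved residual case precisely $n \in \{0,5\}$. That the hypothesis $n \neq 5$ is genuinely required is easily confirmed: the unique nonnegative solution of $4x+5y+6z=11$ is $(0,1,1)$, so starting from $(\alpha,\beta,\gamma)=(0,1,0)$ no alternative to $(0,1,1)$ exists.
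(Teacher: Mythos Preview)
Your proof is correct and takes essentially the same approach as the paper: a short case split on $\alpha$, $\beta$, $\gamma$ that produces an explicit alternative solution in each surviving case, leaving only $n\in\{0,5\}$ unresolved. The paper orders the cases as $\alpha\geq 1$, then $\gamma\geq 1$, then $\beta\geq 2$, and in the last case uses $(\alpha+4,\beta-2,\gamma)$ rather than your $(\alpha+1,\beta-2,\gamma+2)$, but this is a cosmetic difference.
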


\begin{proof}
First suppose $\alpha \geq 1$. Then $(\alpha-1, \beta+2, \gamma)$ is a required
solution.  Next suppose $\alpha = 0$.  If $\gamma \geq 1$, then $(\alpha+3, \beta, \gamma-1)$ is a required
solution.  If $\gamma = 0$, then $\beta \geq 2$ because of the restriction on
$n$, and $(\alpha+4, \beta-2, \gamma)$ is a required solution. 
\end{proof}

The next lemma shows that the coefficient of $q^N$ in $H_{L,3,L}(q)$ is positive
for most values of $L$ and small $N$.  Positivity, as opposed to nonnegativity,
is needed in this case to prove Theorem \ref{GLthree} at the end of the
section.

\begin{lemma}
\label{Helpful2}
For $L \geq 22$ and $N \geq 21$, the coefficient of $q^N$ in $H_{L,3,L}(q)$ is
positive.
\end{lemma}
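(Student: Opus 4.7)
I plan to deduce the lemma from the combinatorial interpretation of the coefficients of $H_{L,3,L}(q)$ given by \eqref{eq:comb}, reducing the claim to showing $|\{\pi \in I_{L,3,L} : |\pi|=N\}| > |\{\pi \in D_{L,3} : |\pi|=N\}|$ for all $L \geq 22$ and $N \geq 21$. Since $L \geq 22 \geq 2s+3$ for $s=3$, Row 1 of Table \ref{tab:casesbound} applies: the injection $\phi$ from \cite[Theorem 10]{BR20} is a size-preserving map $D_{L,3} \hookrightarrow I_{L,3,L}$ that, by design, omits every partition of the shape $(3^{10}, 4^x, 5^y)$ from its image. Because any such witness has all parts at most $5$, and $L \geq 22$, both defining constraints of $I_{L,3,L}$ (largest part $\leq L+3$, no part equal to $L$) are automatically satisfied, so the witness genuinely lies in $I_{L,3,L}$.

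For the large-$N$ tail, Sylvester's theorem applied to the coprime pair $(4, 5)$ guarantees that $4x + 5y = N - 30$ has a nonnegative integer solution as soon as $N - 30 \geq (4-1)(5-1) = 12$, that is, for every $N \geq 42$. Each such solution produces a concrete element of $I_{L,3,L}\setminus \phi(D_{L,3})$ of size $N$, establishing positivity of the coefficient of $q^N$ in $H_{L,3,L}(q)$ for every $L \geq 22$ and $N \geq 42$.

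The remaining finite range $21 \leq N \leq 41$ I would dispatch by a case-by-case verification. For $N \in \{30, 34, 35, 38, 39, 40\}$ the witness $(3^{10}, 4^x, 5^y)$ still applies. For the other residues I would exhibit alternative witnesses, for instance partitions of the form $(3^a, 4^x, 5^y, 6^z)$ with $a \geq 1$, whose existence is governed by Lemma \ref{4,5,6} after absorbing the $3a$ contribution; when $a \neq 10$, one still needs to certify that these alternative witnesses evade $\operatorname{Im}(\phi)$, and this is the chief technical obstacle, since it forces one to unpack the explicit construction of $\phi$ in \cite{BR20}. A cleaner fallback, which avoids re-examining $\phi$, is to enumerate $|\{\pi \in I_{L,3,L} : |\pi|=N\}|$ and $|\{\pi \in D_{L,3} : |\pi|=N\}|$ directly for the finitely many relevant pairs $(L,N)$: the $L$-dependence stabilizes once $L$ is sufficiently large relative to $N$ (essentially once $L > N+3$), so only finitely many pairs with $22 \leq L \leq N+3$ and $21 \leq N \leq 41$ need to be inspected, and the coefficient can be read off from the same $|\text{smallest}=3|_N - |\text{parts}\geq 4|_N$ style difference that underlies \eqref{eq:comb}.
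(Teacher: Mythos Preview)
Your proposal has a fundamental gap: the injection $\phi$ from \cite[Theorem~10]{BR20} that you invoke via Row~1 of Table~\ref{tab:casesbound} is only constructed for $N \geq \Gamma(s)$, not for all $N$. This is stated explicitly in the proof of Theorem~\ref{ModiGenk}: ``the proof there constructs \emph{for $N \geq \Gamma(s)$} an injection as in \eqref{eq:gamma}''. For $s=3$, the threshold $\Gamma(3)$ is astronomically large (it involves iterated exponentials of $P_{L,s}$), so for the entire range $21 \leq N < \Gamma(3)$ you have no injection to appeal to, and hence no argument that $(3^{10},4^x,5^y)$ lies outside its image. Your Sylvester-theorem step for $N \geq 42$ produces a candidate witness, but without a valid injection for those $N$ the witness establishes nothing about the sign of the coefficient. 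Your finite fallback only handles $21 \leq N \leq 41$, leaving the enormous interval $42 \leq N < \Gamma(3)$ completely untreated.

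This is exactly why the paper's proof of Lemma~\ref{Helpful2} does \emph{not} reuse the injection from \cite{BR20}: the whole purpose of the lemma is to beat the bound $\Gamma(3)$ down to $21$, and that forces the construction of a new, much more delicate injection from scratch. The paper carries this out via an extensive case analysis (Cases~1 through 2(c)(ii)($\beta$)(II)(C)(ii)) tailored to $s=3$ and $L\geq 22$, organized by the frequency of $L$ in $\pi$ and the frequency of $3$ in $\phi(\pi)$. There is no shortcut here; any correct proof must supply an injection (or equivalent counting argument) valid for \emph{all} $N\geq 21$, not merely for $N\geq \Gamma(3)$.
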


\begin{proof}
Fix $L \geq 22$ and $N \geq 21$.
Recall the combinatorial interpretation of the coefficients of $H_{L,3,L}(q)$ in \eqref{eq:comb}.
% \begin{itemize}
%\item $D_{L,s}$  denotes the set of nonempty partitions
%		with parts in the set $\{s+1,  \ldots,
%		L+s\}$. 
% \item $I_{L,s,k}$ is the set of
%partitions where the smallest part is $s$, all parts are $\leq L+s$, and $k$ does
%not appear as a part.
%\end{itemize}
%When $s+1 \leq k \leq
%L+s$, elementary partition theory gives the coefficient of $q^N$ in $H_{L,s,k}(q)$ 
%as 
%\begin{equation*}
%	|\{\pi \in I_{L,s,k} : |\pi| = N \}| - |\{\pi \in D_{L,s} : |\pi| =
%        N\}|.  
%\end{equation*}
We prove nonnegativity of the coefficients of $H_{L,3,L}(q)$ by
constructing an injective map $\phi$ as in
\eqref{eq:gamma} for $s=3$.   Positivity will then
be shown at the end of the proof by displaying an element of the codomain of
$\phi$ that is not in its range.  

Let $\pi = \left(4^{f_4},
\ldots, L^{f_L}, \ldots, (L+3)^{f_{L+3}}\right)$ be a partition of $N$ in $D_{L,3}$ and
let $\freq$ denote $f_L$.  Recall that partitions of $N$ in $I_{L, 3, L}$, the
codomain of $\phi$, have
parts in the set $\{3, \ldots, L+3\}$, the number 3 must occur as a part, and
$L$ does not occur as a part.   Our proof of injectivity of $\phi$ is as
follows.  
\begin{itemize}
	\item We define $\phi(\pi)$ in cases chiefly determined by
$\freq$ in $\pi$, with several subcases that depend on the frequencies of other
parts of $\pi$.  In each case, it will usually be readily apparent that $\phi$
is injective, but we will provide some justification for more complicated cases.
	\item When we analyze why $\phi$ is injective overall, we will gather cases by the frequency of 3 in the image of $\phi$.  Thus each case is
labelled twice: first by its case determined by the frequency $\freq$ of $L$ in $\pi$
(see below, for example, Case 2(c)(ii)($\alpha$)) and second, in parentheses, by
the frequency of $3$ in $\phi(\pi)$ (for example, \bone*).  Once the cases are gathered by their frequency of 3 in the image
of $\phi$, we analyze, for each fixed $i$, all cases where the frequency of 3 is $i$
in the image of $\phi$, and we argue why $\phi$ is injective collectively in
these cases.  For example, on
Page \pageref{page:pageas} the cases ${\mage (\mathrm{A}*)}$ are all the cases where the
frequency of 3 is 1 in the image of $\phi$.  
	\item We then argue that $\phi$ must be
injective overall because distinct cases where in the image of $\phi$ the frequency of 3
in one case is $i$ and the frequency of 3 in the other case is $j$, where $i
\neq j$, cannot contain common elements, so two partitions $\pi$ and $\pi'$
pertaining to distinct cases $i$ and $j$ cannot have the same image.
\end{itemize}

We now define $\phi$.

Case 1 {\mage (}\fone, \kone{\mage )} (this case, exceptionally, has many values
for the frequency of 3 in a partition in the image of $\phi$, so it has more than one pink label): $f\geq 1$.  Notice $(L-18)i \geq 4$ for all $i \geq 1$, so the equation 
\begin{equation}\label{eq:spread}
	(L-18)i = 4x_i + 5y_i +6z_i + 7u_i
\end{equation} has a nonnegative integer solution by Lemma \ref{4,5,6,7}. For each $i \geq 1$, fix such a 
 solution $x_i, y_i, z_i$ and $u_i$. Define $$\phi (\pi) = \left(3^{6f}, 4^{f_4+x_f}, 5^{f_5+y_f}, 6^{f_6+z_f}, 7^{f_7+u_f}, \ldots, L^0, \ldots \right). $$
The function $\phi$ is injective in this case.  Given a partition 
$\phi (\pi) = \left(3^{6f}, 4^{a}, 5^{b}, 6^{c}, 7^{d}, \ldots, L^0, \ldots
\right)$ in the range $\phi$, we can infer $\pi$ comes from this case (no cases below
have the same frequency of 3).  From the frequency of 3 in $\phi(\pi)$, we can infer $\freq$;  then,
from \eqref{eq:spread}, we can infer $x_f, y_f, z_f$ and $u_f$;  finally, from
$\freq$ and $x_f, y_f, z_f$ and $u_f$, we can reconstruct $\pi$.

 Case 2: $f=0$.  We have the following subcases. Recall that the smallest part of $\pi$ is denoted by $s(\pi)$.

 Case 2(a) \bnewone*:   $s(\pi) = L+3$.  Then $\pi = ((L+3)^{\freq_{L+3}})$.  Define
	 $$\phi(\pi) = \left(3^2, 4^2, 5^1, (L-16), \ldots, (L+3)^{\freq_{L+3} - 1}\right).$$
	Note $L-16 \geq 6$ because $L \geq 22$.

 Case 2(b) \aone*: $7 \leq s(\pi) < L+3$.  Define $$\phi(\pi) = \left(3^1,
 (s(\pi)-3)^1, (s(\pi)^{f_{s(\pi)}-1}), \ldots, \right). $$
 Note $s(\pi) - 3 \neq L$, so no part of size $L$ is created. 

 Case 2(c): $s(\pi) \leq 6$. We have the following subcases.
 
 Case 2(c)(i) \cone*: $f_4 \geq 1$ and $f_5 \geq 1$.  Define $$\phi(\pi) = \left(3^3, 4^{f_4-1},5^{f_5-1}, 6^{f_6}, \ldots \right). $$
 
 Case 2(c)(ii): $f_4=0$ or $f_5 =0$.  We have the following subcases.
 
 Case 2(c)(ii)($\alpha$) \bone*: $f_6 \geq 1$. Define $$ \phi(\pi) = (3^2, 4^{f_4}, 5^{f_5}, 6^{f_6-1}, \ldots). $$
 
 Case 2(c)(ii)($\beta$): $f_6=0$. Thus in this subcase either $f_4=f_6=0$ or
 $f_5=f_6=0$, and since $s(\pi) \leq 6$, precisely one of these two conditions holds. We have further subcases.
   
  Case 2(c)(ii)($\beta$)(I): $f_4=f_6=0$. Then $\pi = \left(5^{f_5}, 7^{f_7}, \ldots \right)$. 
  
  Case 2(c)(ii)($\beta$)(I)(A) \eone*: $f_5 \geq 3$. Define $$ \phi(\pi)= (3^5, 5^{f_5-3}, 7^{f_7}, \ldots).$$
  
  Case 2(c)(ii)($\beta$)(I)(B): $f_5 =1$. So $\pi = \left(5^1,7^{f_7}, \ldots,
  \right)$. Let $m_1 \geq 7$  be the least number with a nonzero frequency in
  $\pi$, which must exist because $N \geq 21$. 
    
  Case 2(c)(ii)($\beta$)(I)(B)(i) \atwo*: $m_1 \neq 7, 11, 12$. Then $m_1-3 \geq
  5$ and $m_1-3 \neq 8,9$. By Lemma \ref{5,6,7} there exist some nonnegative integers $u_{m_1-3}, v_{m_1-3}$ and $w_{m_1-3}$ such that 
  \begin{equation}\label{eq:defatwo}
	  m_1-3 = 5 u_{m_1-3} + 6 v_{m_1-3} + 7 w_{m_1-3}. 
\end{equation}
Define $$\phi(\pi) = \left(3^1, 5^{1+u_{m_1-3}}, 6^{v_{m_1-3}}, 7^{w_{m_1-3}}, m_1^{f_{m_1}-1}, \ldots \right). $$
 
Our explanation for why $\phi$ is injective in this case is similar to that in
Case 1.  Suppose that we are given
an element in the range of $\phi$ of the form $\phi(\pi) = \left(3^1, 5^{1+A}, 6^{B}, 7^{C},
m_1^{f_{m_1}-1}, \ldots \right)$.  Then, using \eqref{eq:defatwo}, we can find $m_1 - 3$ 
from $A, B$ and $C$, and from there $m_1$ can be recovered.  From
$m_1$, we can reconstruct the partition $\pi$ uniquely.  

There are cases below where the reasoning that $\phi$ is injective is similar to
this case and Case 1, so we omit the details there.

  Case 2(c)(ii)($\beta$)(I)(B)(ii): $m_1 = 7$.
  
  Case 2(c)(ii)($\beta$)(I)(B)(ii)(a) \etwo*:  $f_7 \geq 2$. Then define $$ \phi(\pi) = \left(3^5, 4^1, 5^0, 7^{f_7-2}, \ldots \right). $$
  
  Case 2(c)(ii)($\beta$)(I)(B)(ii)(b): $f_7 =1$. Then $\pi = \left(5^1, 7^1, 8^{f_8}, \ldots \right)$. Let $m_2 \geq 8$  be the least number with a nonzero frequency in $\pi$. 
  
  Case 2(c)(ii)($\beta$)(I)(B)(ii)(b)(i) \btwo*: $m_2=8$.  Define $$ \phi(\pi) = \left(3^2, 4^1, 5^2, 7^0, 8^{f_8-1}, \ldots \right). $$
  
  Case 2(c)(ii)($\beta$)(I)(B)(ii)(b)(ii) \bthree*: $9 \leq m_2 < L+3$.  Define $$\phi(\pi) = \left(3^2, 4^1, 5^1, (m_2-3)^1, m_2^{f_{m_2}-1}, \ldots \right).$$
  To be clear, our notation indicates that the frequency of $7$ in $\phi(\pi)$
  is 0 unless $m_2 = 10$.

 Case 2(c)(ii)($\beta$)(I)(B)(ii)(b)(iii) \bfive*:   $m_2 = L+3$.  Then  $\pi = \left(5^1,
7^1,  \ldots (L+3)^{\freq_{L+3}} \right)$.  Define  $$\phi(\pi) = \left(3^2, 4^2,
5^2, 7^0, (L-9)^1, (L+3)^{\freq_{L+3}-1} \right).$$

   Case 2(c)(ii)($\beta$)(I)(B)(iii): $m_1 = 11$.  Then $ \pi = \left(5^1, 11^{f_{11}}, \ldots \right)$. We have further subcases.
   
   Case 2(c)(ii)($\beta$)(I)(B)(iii)(a) \ione*: $f_{11} \geq 2$.  Define $$\phi(\pi) = \left(3^9, 5^0, 11^{f_{11}-2}, \ldots \right). $$
   
   Case 2(c)(ii)($\beta$)(I)(B)(iii)(b) \hone*: $f_{11} = 1$.  Then $ \pi = \left(5^1, 11^1, 12^{f_{12}}, \ldots \right)$. Let $m_3 \geq 12$  be the least number with a nonzero frequency in $\pi$. 
   Then define $$ \phi(\pi) = \left(3^8, (m_3-8)^1, m_3^{f_{m_3}-1}, \ldots \right). $$
   
   Case 2(c)(ii)($\beta$)(I)(B)(iv): $m_1 = 12$.  Then $\pi = \left(5^1,
   12^{f_{12}}, \ldots \right)$.   We have further subcases.
   
   Case 2(c)(ii)($\beta$)(I)(B)(iv)(a) \gone*: $f_{12} \geq 2$.  Define $$\phi(\pi) = \left(3^7, 4^2, 5^0, 12^{f_{12}-2}, \ldots \right). $$
   
    Case 2(c)(ii)($\beta$)(I)(B)(iv)(b): $f_{12} = 1$.  Then $ \pi = \left(5^1,
	12^1, 13^{f_{13}}, \ldots \right)$. Let $m_4 \geq 13$  be the least number
	with a nonzero frequency in $\pi$,  so $\pi = \left(5^1, 12^1, m_4^{f_{m_4}}, \ldots \right)$. 
    
    Case 2(c)(ii)($\beta$)(I)(B)(iv)(b)(i) \done*: $m_4 = 13$.  Define $$\phi(\pi) = \left(3^4, 6^3, 13^{f_{13}-1}, \ldots \right). $$
    
Case 2(c)(ii)($\beta$)(I)(B)(iv)(b)(ii) \itwo*: $m_4 \geq 14$.  Then $m_4-10 \geq 4$, and by Lemma \ref{4,5,6,7} there exist nonnegative integers $X_{m_4-10}, Y_{m_4-10}, Z_{m_4-10}$ and $U_{m_4-10}$ such that $$m_4-10 = 4X_{m_4-10}+5Y_{m_4-10}+6Z_{m_4-10}+7U_{m_4-10}. $$
     For each $m_4 \geq 14$, fix a solution to the above equation and define $$ \phi(\pi) = \left(3^9, 4^{X_{m_4-10}}, 5^{Y_{m_4-10}}, 6^{Z_{m_4-10}}, 7^{U_{m_4-10}}, m_4^{f_{m_4}-1}, \ldots, \right). $$
     
     Case 2(c)(ii)($\beta$)(I)(C): $f_5 =2$. Thus $\pi = \left(5^2,7^{f_7}, \ldots, \right)$. Let $m_5 \geq 7$  be the least number with a nonzero frequency in $\pi$. 
     
	 Case 2(c)(ii)($\beta$)(I)(C)(i) \athree*: $m_5 \neq 10$. Then $m_5 - 3 \geq
	 4$ and $m_5-3 \neq 7$.  By Lemma \ref{4,5,6} there are nonnegative integers $x_{m_5-3}, y_{m_5-3}$ and $z_{m_5-3}$ of the equation $$ m_5-3 = 4x_{m_5-3} + 5y_{m_5-3} + 6z_{m_5-3}. $$  For each $m_5 \geq 7$ such that $m_5 \neq 10$, fix a solution to the above equation and define $$ \phi(\pi) = \left(3^1, 4^{1+{x_{m_5-3}}}, 5^{y_{m_5-3}}, 6^{1+z_{m_5-3}}, m_5^{f_{m_5-1}}, \ldots \right). $$
     
      Case 2(c)(ii)($\beta$)(I)(C)(ii): $m_5 = 10$.  Then $\pi = (5^2, 10^{f_{10}}, \ldots) $. 

      Case 2(c)(ii)($\beta$)(I)(C)(ii)(a) \jone*: $f_{10} \geq 2$. Then define $$ \phi(\pi) = \left(3^{10}, 10^{f_{10}-2}, \ldots \right). $$
      
      Case 2(c)(ii)($\beta$)(I)(C)(ii)(b): $f_{10} =1$. Then $\pi = (5^2, 10^1, 11^{f_{11}}, \ldots)$. Let $m_6 \geq 11$  be the least number with a nonzero frequency in $\pi$. 
      
      Case 2(c)(ii)($\beta$)(I)(C)(ii)(b)(i) \gtwo*: $m_6$ is odd. Then define $$ \phi(\pi) = \left(3^7, \left(\frac{m_6-1}{2} \right)^2, m_6^{f_{m_6}-1}, \ldots \right). $$
      
       Case 2(c)(ii)($\beta$)(I)(C)(ii)(b)(ii) \gthree*: $m_6$ is even. Then define $$ \phi(\pi) = \left(3^7, \left(\frac{m_6}{2}-1 \right)^1, \left(\frac{m_6}{2} \right)^1,  m_6^{f_{m_6}-1}, \ldots \right). $$
       
       Case 2(c)(ii)($\beta$)(II): $f_5=f_6=0$.  Since $s(\pi) \leq 6$, we have $f_4 \geq 1$. Thus $\pi = \left(4^{f_4}, 7^{f_7}, \ldots \right)$.
  
  Case 2(c)(ii)($\beta$)(II)(A) \ftwo*: $f_4 \geq 3$. Define $$\phi(\pi) = \left(3^4, 4^{{f_4}-3}, 7^{f_7}, \ldots \right).$$
 
  Case 2(c)(ii)($\beta$)(II)(B): $f_4 = 1$.  Thus $\pi = \left(4^1, 7^{f_7}, \ldots \right)$. Let $m_7 \geq 7$ be the least number with a nonzero frequency in $\pi$.  Thus $\pi = \left(4^1, m_7^{f_{m_7}}, \ldots \right)$. 
  
  Case 2(c)(ii)($\beta$)(II)(B)(i) \afour*: $m_7 \neq 10, 14$. Then $m_7 - 3
  \geq 4$ and $m_7-3 \neq 7, 11$.  By Lemma \ref{4,5,6} there is a triple $(x_{m_7-3}, y_{m_7-3}, z_{m_7-3})$ such that $$ m_7-3 = 4x_{m_7-3}+5y_{m_7-3}+6z_{m_7-3}.$$ 
  
  Crucially, to avoid injectivity problems with the Case 2(c)(ii)($\beta$)(I)(C)(i), if $m_7 = m_5+6$, using Lemma \ref{twosol}, we choose a solution such that $(x_{m_7-3}, y_{m_7-3}, z_{m_7-3}) \neq (x_{m_5-3}, y_{m_5-3}, 1+z_{m_5-3})$.
  Note here that $m_7 \neq 14$, so $m_5 - 3 \neq 5$, which is required to use Lemma \ref{twosol}.
  
  Define $$\phi(\pi) = \left(3^1, 4^{1+x_{m_7-3}}, 5^{y_{m_7-3}}, 6^{z_{m_7-3}}, \ldots, m_7^{f_{m_7}-1}, \ldots \right).$$
  
  Case 2(c)(ii)($\beta$)(II)(B)(ii): $m_7 = 10$. Then $\pi = \left(4^1, 10^{f_{10}}, \ldots \right)$. 
  
   Case 2(c)(ii)($\beta$)(II)(B)(ii)(a) \dthree*: $f_{10} \geq 2$. Define $$\phi(\pi) = \left(3^4, 6^2, 10^{f_{10}-2}, \ldots \right). $$
  
  Case 2(c)(ii)($\beta$)(II)(B)(ii)(b) \gfour*: $f_{10}=1$. Thus $\pi = \left(4^1,10^1,11^{f_{11}}, \ldots \right)$. Let $m_8 \geq 11$ be the least number with a nonzero frequency in $\pi$. 
  Then define $$\phi(\pi) = \left(3^7, (m_8-7)^1, m_8^{f_{m_8}-1}, \ldots \right).$$
  
  Case 2(c)(ii)($\beta$)(II)(B)(iii) \ftwo*: $m_7 = 14$. So $\pi = \left(4^1, 14^{f_{14}}, \ldots \right)$. Define $$ \phi(\pi) = \left(3^6, 4^0,14^{f_{14}-1}, \ldots \right). $$
  
  Case 2(c)(ii)($\beta$)(II)(C): $f_4 = 2$.  Thus $\pi = \left(4^2, 7^{f_7}, \ldots \right)$. Let $m_9 \geq 7$ be the least number with a nonzero frequency in $\pi$. 
  
  Case 2(c)(ii)($\beta$)(II)(C)(i) \afive*: $m_9$ is odd.  Define $$ \phi(\pi) = \left(3^1, \left(\frac{m_9+5}{2} \right)^2, m_9^{f_{m_9-1}}, \ldots \right). $$
  
  Case 2(c)(ii)($\beta$)(II)(C)(ii) \asix*: $m_9$ is even. Define $$ \phi(\pi) = \left(3^1, \left(\frac{m_9}{2} +2\right)^1, \left(\frac{m_9}{2} + 3 \right)^1, m_9^{f_{m_9-1}}, \ldots  \right). $$

To prove the injectivity of the map $\phi$, we organize the cases based on the various frequencies of $3$ in $\phi(\pi)$.

First we organize the cases where the frequency of $3$ in $\phi(\pi)$ is
$1$.\label{page:pageas}

\begin{enumerate}
\item[\aone.] Case 2(b): $\phi(\pi) = \left(3^1, (s(\pi)-3)^1, (s(\pi)^{f_{s(\pi)}-1}), \ldots, \right)$, where $s(\pi) \geq 7$.
\item[\atwo.] Case 2(c)(ii)($\beta$)(I)(B)(i): $\phi(\pi) = \left(3^1, 5^{1+A}, 6^{B}, 7^{C}, m_1^{f_{m_1}-1}, \ldots \right)$, where $m_1 \geq 8$, $m_1 \neq 11,12$, and $A,B$ and $C$ are some nonnegative integers such that at least one of these is positive.
\item[\athree.] Case 2(c)(ii)($\beta$)(I)(C)(i): $ \phi(\pi) = \left(3^1, 4^{1+{x_{m_5-3}}}, 5^{y_{m_5-3}}, 6^{1+z_{m_5-3}}, m_5^{f_{m_5-1}}, \ldots \right)$, where $m_5 \geq 7, m_5 \neq 10$, and $4x_{m_5-3}+5y_{m_5-3}+6z_{m_5-3} = m_5-3$.
\item[\afour.] Case 2(c)(ii)($\beta$)(II)(B)(i): $\phi(\pi) = \left(3^1, 4^{1+x_{m_7-3}}, 5^{y_{m_7-3}}, 6^{z_{m_7-3}}, \ldots, m_7^{f_{m_7}-1}, \ldots \right)$, where $m_7 \geq 7, m_7 \neq 10,14$, and $4x_{m_7-3}+5y_{m_7-3}+6z_{m_7-3} = m_7-3$. Moreover, 
 if $m_7 = m_5+6$, then $(x_{m_7-3}, y_{m_7-3}, z_{m_7-3}) \neq (x_{m_5-3}, y_{m_5-3}, 1+z_{m_5-3})$.
\item[\afive.] Case 2(c)(ii)($\beta$)(II)(C)(i):  $ \phi(\pi) = \left(3^1, \left(\frac{m_9+5}{2} \right)^2, m_9^{f_{m_9-1}}, \ldots \right)$, where $m_9 \geq 7$ is odd.
\item[\asix.] Case 2(c)(ii)($\beta$)(II)(C)(ii):  $ \phi(\pi) = \left(3^1, \left(\frac{m_9}{2} +2\right)^1, \left(\frac{m_9}{2} + 3 \right)^1, m_9^{f_{m_9-1}}, \ldots  \right)$, where $m_9 \geq 7$ is even.
\end{enumerate}

Each case above is individually injective (we can find $\pi$ from $\phi(\pi)$).
We explain why the map $\phi$ is injective overall so far, and we do this by confirming that no two distinct
cases contain common partitions.  In Case \aone, the second
smallest and the third smallest parts differ by at least $3$, and the frequency
of the second smallest part is $1$. This distinguishes it from all the other cases.
In Case \atwo, the number $4$ is not present as a part, which distinguishes it from
Cases \athree and \afour, and it contains $5$ as a part, which distinguishes it
from Cases \afive and \asix. In Cases \athree and \afour, the number $4$ is
present as a part, which distinguishes it from Cases \afive and \asix.  Cases
\afive and \asix are distinguished by the frequency of the second smallest part. 

What remains is to show that Cases \athree and \afour can be distinguished, and
we show this by demonstrating the cases have no common element in
the image of $\phi$.  Suppose, to the contrary, that Cases \athree and \afour have a common element.  Then
$(x_{m_7-3}, y_{m_7-3}, z_{m_7-3}) = (x_{m_5-3}, y_{m_5-3}, 1+z_{m_5-3})$. From
the relations $4x_{m_5-3}+5y_{m_5-3}+6z_{m_5-3} = m_5-3$ and
$4x_{m_7-3}+5y_{m_7-3}+6z_{m_7-3} = m_7-3$, we obtain $m_7 = m_5+6$. But if $m_7 = m_5+6$, then $(x_{m_7-3}, y_{m_7-3}, z_{m_7-3}) \neq
(x_{m_5-3}, y_{m_5-3}, 1+z_{m_5-3})$, giving the required contradiction.

We follow the same reasoning below.  We collect cases according to the frequency of
3 in partitions in the range of $\phi$.  We then explain why
all the distinct cases with fixed frequency of 3 have no partitions in common in their
range.  We leave the verification that each of the different cases for fixed
frequency of 3 in the range of $\phi$ are individually injective to the reader.

Next we organize the cases where the frequency of $3$ in $\phi(\pi)$ is $2$.

\begin{itemize}
	\item[\bnewone.] Case 2(a) :  $\phi(\pi) = \left(3^2, 4^2, 5^1, (L-16), \ldots, (L+3)^{\freq_{L+3} - 1}\right).$

	\item[\bone.] Case 2(c)(ii)($\alpha$): $\phi(\pi)=(3^2,4^{f_4},5^{f_5}, \ldots)$, where $f_4=0$ or $f_5 = 0$.
	\item[\btwo.]  Case 2(c)(ii)($\beta$)(I)(B)(ii)(b)(i): $ \phi(\pi) =
		\left(3^2, 4^1, 5^2, 8^{f_8-1},\ldots \right)$, where $f_8 \geq 1$.
	\item[\bthree.] Case 2(c)(ii)($\beta$)(I)(B)(ii)(b)(ii): $ \phi(\pi) =
		\left(3^2, 4^1, 5^1, (m_2-3)^1, m_2^{f_{m_2}-1}, \ldots \right)$, where
		$9 \geq m_2 < L+3$.
	\item[\bfive.] Case 2(c)(ii)($\beta$)(I)(B)(ii)(b)(iii):  $\phi(\pi) = \left(3^2, 4^2,
5^2, 7^0, (L-9)^1, (L+3)^{\freq_{L+3}-1} \right).$
\end{itemize}
These cases are distinguished by their frequencies of $4$ and $5$.

There is only one case where the frequency of $3$ in $\phi(\pi)$ is $3$:
\begin{itemize}
	\item[\cone.] Case 2(c)(i): $\phi(\pi) = \left(3^3, 4^{f_4-1},5^{f_5-1}, 6^{f_6}, \ldots \right)$.
\end{itemize}
So it is distinguishable from other cases.

Next we organize the cases in which the frequency of $3$ in $\phi(\pi)$ is $4$.

\begin{itemize}
	\item[\done.] Case 2(c)(ii)($\beta$)(I)(B)(iv)(b)(i):  $ \phi(\pi) =
		\left(3^4, 6^3, 13^{f_{13}-1}, \ldots \right)$, where $f_{13} \geq 1$.
	\item[\dtwo.] Case 2(c)(ii)($\beta$)(II)(A): $\phi(\pi) = \left(3^4, 4^{{f_4}-3}, 7^{f_7}, \ldots \right) $, where $f_4 \geq 3$.
	\item[\dthree.] Case 2(c)(ii)($\beta$)(II)(B)(ii)(a): $\phi(\pi) =
		\left(3^4, 6^2, 10^{f_{10}-2}, \ldots \right)$, where $f_{10} \geq 2$.
\end{itemize}

Thus, when the frequency of $3$ in $\phi(\pi)$ is $4$, these cases are
distinguishable by the frequency of 6 in the image.
Next we organize the cases in which the frequency of $3$ in $\phi(\pi)$ is $5$.

\begin{itemize}
	\item[\eone.] Case 2(c)(ii)($\beta$)(I)(A):  $ \phi(\pi)= \left(3^5,
			5^{f_5-3}, 7^{f_7}, \ldots\right)$, where $f_5 \geq 3$.
	\item[\etwo.] Case 2(c)(ii)($\beta$)(I)(B)(ii)(a):  $ \phi(\pi) = \left(3^5, 4^1, 7^{f_7-2}, \ldots \right)$, where $f_7 \geq 2$ .
\end{itemize}

Thus, when the frequency of $3$ in $\phi(\pi)$ is $5$, these cases are
distinguishable by the frequency of 4 in the image.
Next we organize the cases in which the frequency of $3$ in $\phi(\pi)$ is $6$.

\begin{itemize}
	\item[\fone.] Case 1 with $f=1$: $\phi (\pi) = \left(3^6, 4^{\alpha},
		5^{\beta}, 6^{\gamma}, 7^{\delta}, \ldots  \right)$, where $\alpha,
		\beta, \gamma$ and $\delta$ are nonnegative integers with at least one positive.
	\item[\ftwo.] Case 2(c)(ii)($\beta$)(II)(B)(iii): $ \phi(\pi) = \left(3^6, 14^{f_{14}-1}, \ldots \right)$, where $f_{14} \geq 1$.
\end{itemize}

Thus, when the frequency of $3$ in $\phi(\pi)$ is $6$, these cases are distinguishable.
Next we organize the cases in which the frequency of $3$ in $\phi(\pi)$ is $7$.

\begin{itemize}
	\item[\gone.] Case 2(c)(ii)($\beta$)(I)(B)(iv)(a):  $ \phi(\pi) = \left(3^7, 4^2, 12^{f_{12}-2}, \ldots \right)$, where $f_{12} \geq 2$.
	\item[\gtwo.] Case 2(c)(ii)($\beta$)(I)(C)(ii)(b)(i):  $ \phi(\pi) = \left(3^7, \left(\frac{m_6-1}{2} \right)^2, m_6^{f_{m_6}-1}, \ldots \right)$, where $m_6 \geq 11$ is odd.
	\item[\gthree.] Case 2(c)(ii)($\beta$)(I)(C)(ii)(b)(ii): $ \phi(\pi) = \left(3^7, \left(\frac{m_6}{2}-1 \right)^1, \left(\frac{m_6}{2} \right)^1,  m_6^{f_{m_6}-1}, \ldots \right)$, where $m_6 \geq 11$ is even.
	\item[\gfour.] Case 2(c)(ii)($\beta$)(II)(B)(ii)(b): $\phi(\pi) = (3^7, (m_8-7)^1, m_8^{f_{m_8}-1}, \ldots)$, where $m_8 \geq 11$.
\end{itemize}

Thus, when the frequency of $3$ in $\phi(\pi)$ is $7$, the next parts after $3$ and their frequencies distinguish the various cases.

There is only one case in which the frequency of $3$ in $\phi(\pi)$ is $8$: 

\begin{itemize}
	\item[\hone.] Case 2(c)(ii)($\beta$)(I)(B)(iii)(b): $ \phi(\pi) =
		\left(3^8, (m_3-8)^1, m_3^{f_{m_3}-1}, \ldots \right)$, where $m_3 \geq
		12$ and $\freq_{m_3} \geq 1$.
\end{itemize}
Next we organize the cases in which the frequency of $3$ in $\phi(\pi)$ is $9$.

\begin{itemize}
	\item[\ione.]   Case 2(c)(ii)($\beta$)(I)(B)(iii)(a): $\phi(\pi) = \left(3^9, 11^{f_{11}-2}, \ldots \right)$, where $f_{11} \geq 2$
	\item[\itwo.]   Case 2(c)(ii)($\beta$)(I)(B)(iv)(b)(ii): $ \phi(\pi) = \left(3^9, 4^{\alpha}, 5^{\beta}, 6^{\gamma}, 7^{\delta}, m_4^{f_{m_4}-1}, \ldots, \right)$, where $\alpha, \beta, \gamma$ and $\delta$ are nonnegative integers such that at least one of these is positive and $m_4 \geq 14$.
\end{itemize}

Thus, when the frequency of $3$ in $\phi(\pi)$ is $9$, these cases are distinguishable.

There is only case in which the frequency of $3$ in $\phi(\pi)$ is $10$. 
\begin{itemize}
	\item[\jone.] Case 2(c)(ii)($\beta$)(I)(C)(ii)(a): $ \phi(\pi) = \left(3^{10}, 10^{f_{10}-2}, \ldots \right)$, where $f_{10} \geq 2$.
\end{itemize}

Finally, there is only one case in which the frequency of $3$ in $\phi(\pi)$ is
$6f$ for some $f \geq 2$.
\begin{itemize}
	\item[\kone.] Case $1$ with $f \geq 2$. $\phi (\pi) = \left(3^6, 4^{\alpha}, 5^{\beta}, 6^{\gamma}, 7^{\delta}, \ldots  \right)$, where $\alpha, \beta, \gamma$ and $\delta$ are some nonnegative integers such that at least one of these is positive.
\end{itemize}

Hence all the cases are distinguishable, and the map $\phi$ is injective.
This shows nonnegativity of the coefficient of $q^N$  in $H_{L,3,L}(q)$
when $L \geq 22$ and $N \geq 21$.  To show these coefficients are
positive, we find an element of the codomain of $\phi$ that is not in
its range.  In all cases such an element will have frequency of 3
equalling 4, and these can be compared to  \done* -
\dthree* to ensure they are not in the range of $\phi$.  If $N=21$ and $N=22$, the partitions $\pi_{21} = (3^4, 4^1, 5^1)$ and
$\pi_{22} = (3^4, 5^2)$ are partitions not in the range but in the codomain of $\phi$.

For the remaining cases, we need the following result:  for any positive integer
$n \geq 6$, the equation
\begin{equation*}
	6x_6 + 7x_7 + \cdots + 11x_{11} = n
\end{equation*}
has a solution where $x_6, \ldots, x_{11}$ are nonnegative integers (see
\cite[Lemma 8]{BR20} for a proof of a more general result).  If $N \geq
23$, then $N-17 \geq 6$, so we can fix a solution with nonnegative integers
to the equation
\begin{equation*}
	6x_6 + 7x_7 + \cdots + 11x_{11} = N - 17.
\end{equation*}
Then the partition $\pi_N = (3^4, 5^1, 6^{x_6}, \ldots, 11^{x_{11}}, \ldots)$ is
not in the range but in the codomain of $\phi$.

\end{proof}

We are left with the cases $4 \leq L \leq 21$. We deal with $7 \leq L \leq 21$
in Lemma \ref{Two2} below. The proof of this lemma is similar in spirit to the
proof of \cite[Theorem 19]{BR20}.

\begin{lemma}
\label{Two2}
Let $7 \leq L \leq 21$ and $N_L= L^2+10L+7$.  Then the coefficient of $q^N$ in
$H_{L,3,L}(q)$ is nonnegative whenever $N \geq N_L $.
\end{lemma}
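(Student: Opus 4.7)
The plan is to follow the template of Lemma \ref{Helpful2} and of \cite[Theorem 19]{BR20}. For each $L$ in the range $7 \leq L \leq 21$ and each $N \geq N_L$, we would construct an injection $\phi$ as in \eqref{eq:gamma} from $\{\pi \in D_{L,3} : |\pi| = N\}$ into $\{\pi \in I_{L,3,L} : |\pi| = N\}$; by the combinatorial interpretation \eqref{eq:comb} this immediately yields nonnegativity of the coefficient of $q^N$ in $H_{L,3,L}(q)$.

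As in Lemma \ref{Helpful2}, the construction branches on the frequency $f = f_L$ of the part $L$ in $\pi$. When $f = 0$, most subcases of Lemma \ref{Helpful2}'s Case 2 depend only on parts in $\{4,\ldots,L+3\}$ together with inequalities such as $L - 16 \geq 6$ (Case 2(a)) and $L - 9 \geq 6$ (Case 2(c)(ii)($\beta$)(I)(B)(ii)(b)(iii)); they carry over for the larger $L$ in the present range and must be replaced by a handful of hand-tailored constructions for the smaller $L$. When $f \geq 1$, the principal device of Lemma \ref{Helpful2} — converting each copy of $L$ into six copies of $3$ plus a Sylvester residue $(L-18)f$ distributed among parts $4,5,6,7$ — is unavailable here because $L - 18 < 4$. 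In its place we would use the much simpler identity $L = 3 + (L-3)$, which for $L \geq 7$ gives $L - 3 \in \{4,\ldots,L+3\}$ and $L - 3 \neq L$, so replacing each copy of $L$ by one $3$ and one $L - 3$ produces a valid image in $I_{L,3,L}$. The residual stragglers (small $f$ together with degenerate profiles of $\pi$) can be absorbed using the Sylvester-type decompositions supplied by Lemmas \ref{4,5,6}--\ref{twosol}.

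The quadratic threshold $N_L = L^2 + 10L + 7$ should emerge from two counting considerations. Since every part of $\pi$ is at most $L + 3$, the partition $\pi$ has at least $\lceil N/(L+3)\rceil \geq L + 7$ parts when $N \geq N_L$, which is exactly the minimum supply needed to perform the local exchanges in the small-$f$ subcases and to guarantee that the auxiliary equations of Lemmas \ref{4,5,6}--\ref{twosol} admit solutions in the relevant ranges. The main obstacle, as in Lemma \ref{Helpful2}, will be the global injectivity check: gathering cases by the frequency of $3$ in $\phi(\pi)$, as done on page \pageref{page:pageas}, the replacement $L \mapsto (3, L-3)$ introduces a variable number of $3$'s (one per copy of $L$) and so interacts with almost every subcase of the $f = 0$ branch. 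Collisions would have to be broken either by redesigning a small number of those subcases or by pre-emptively restricting the permitted decompositions, exactly as was done between Cases \athree* and \afour* of Lemma \ref{Helpful2}. Since only fifteen values of $L$ are involved, one could if necessary fall back on a case-by-case verification of a residual handful of borderline $(L,N)$ pairs by machine, while the general construction handles all $N \geq N_L$ uniformly.
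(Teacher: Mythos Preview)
Your proposal is a plan rather than a proof, and the plan as stated leaves the central difficulty unresolved. You correctly identify that the main obstacle is global injectivity: with your replacement $L\mapsto(3,L-3)$, the frequency of $3$ in $\phi(\pi)$ equals $f$, which ranges over \emph{all} positive integers, so it collides with essentially every subcase of your $f=0$ branch. You propose to fix this by ``redesigning a small number of those subcases'' or by machine verification, but you do not carry this out, and in fact the number of collisions is not small: every value of $f\geq1$ must be separated from some $f=0$ subcase. Nor does your heuristic $\lceil N/(L+3)\rceil\geq L+7$ actually pin down $N_L=L^2+10L+7$; that bound arises from a specific pigeonhole count you have not performed.

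The paper's proof avoids all of this with a much simpler idea. For $f\geq1$ it uses the identity $3L=L\cdot3$: three copies of $L$ become $L$ copies of $3$. Writing $f=3q+r$ with $r\in\{0,1,2\}$, one converts $3q$ copies of $L$ into $Lq$ copies of $3$, and each of the $r$ leftover copies via your own $L=3+(L-3)$. Thus the frequency of $3$ in the image is $Lq+r$, which is congruent to $r\pmod L$. For $f=0$ the paper does \emph{not} adapt the elaborate Case~2 of Lemma~\ref{Helpful2}; instead it uses pigeonhole: since $N\geq N_L$, either $f_{L+2}\geq6$ or some other part $i_0$ has $f_{i_0}\geq3$, and in either situation one converts $3i_0$ (or $6(L+2)$) into $i_0$ (or $2L+4$) copies of $3$, with a small tweak when $i_0=L+1$. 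The resulting frequencies of $3$ in the $f=0$ subcases are $2L+4$, $i_0$ with $4\leq i_0\leq L+3$, or $3$; modulo $L$ these are disjoint from the values $0,1,2$ produced by the $f\geq1$ cases (except for one overlap between $i_0=4$ and $2L+4$, distinguished by the exact value). This is where the specific threshold $N_L$ comes from: it is exactly the bound needed for the pigeonhole in the $f=0$ case.

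So your $L=3+(L-3)$ is indeed used in the paper, but only as a residue correction; the key device you are missing is $3L=L\cdot3$, which makes the frequency of $3$ large and structured (congruent to $0,1,2\pmod L$) and thereby trivializes the injectivity check.
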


\begin{proof}
	Again it suffices to show that for fixed $7 \leq L \leq 21$ and $N \geq N_L
	$, there
	is an injective map $\phi$ as in \eqref{eq:gamma}.   Let $\pi =
\left(4^{f_4}, \ldots, L^{f_L}, \ldots, (L+3)^{f_{L+3}}\right)$ be a partition
of $N$ in $D_{L,3}$ and let $f = f_L$.  We define $\phi(\pi)$ in cases depending on the
value of $f$.  

Case $1$: $f$ is positive and $f \equiv 0$ (mod $3$).  Then define $$ \phi(\pi) =
\left(3^{\frac{L f}{3}}, 4^{f_4}, \ldots, L^0, \ldots, (L+3)^{f_{L+3}}\right).$$

Case 2: $f \equiv 1$ (mod $3$). Then define $$ \phi(\pi) =
\left(3^{L\left(\frac{f-1}{3}\right)+1}, 4^{f_4}, \ldots, (L-3)^{f_{L-3}+1}, \ldots, L^0,
\ldots, (L+3)^{f_{L+3}}\right).$$

Case $3$: $f \equiv 2$ (mod $3$). Then define $$ \phi(\pi) =
\left(3^{L\left(\frac{f-2}{3}\right)+2}, 4^{f_4}, \ldots, (L-3)^{f_{L-3}+2}, \ldots, L^0,
\ldots, (L+3)^{f_{L+3}}\right).$$

Case $4$: $f = 0$. Since $N \geq N_L$ is large enough, either $f_{L+2} \geq 6$ or there exists an $i \neq L+2$ such
that $4 \leq i \leq L+3$ and $f_i \geq 3$. Note that the condition on $N$ is in fact tight for this to happen. We have further subcases.

Case $4$(i): $f_{L+2} \geq 6$. Then define $$ \phi(\pi) = \left(3^{2L+4}, 4^{f_4}, \ldots L^0, (L+2)^{f_{L+2}-6}, (L+3)^{f_{L+3}}\right). $$

Case $4$(ii): $f_{L+2} \leq 5$, and there exists an $i \neq L+2$ such
that $4 \leq i \leq L+3$ and $f_i \geq 3$. Let $i_0$ be the least such number. Note 
$i_0 \neq L$ since $f = 0$. We have further subcases depending on whether $i_0 = L+1$
or not. 

Case $4$(ii)(a): $i_0 \neq L+1$. Then define $$ \phi(\pi) = \left(3^{i_0},
4^{f_4}, \ldots, i_0^{f_{i_0}-3}, \ldots L^0, \ldots, (L+3)^{f_{L+3}}\right).$$

Case $4$(ii)(b): $i_0 = L+1$. Then define $$  \phi(\pi) = \left(3^3, 4^{f_4},
\ldots, (L-2)^{f_{L-2}+3}, L^0, (L+1)^{f_{L+1}-3}, \ldots, (L+3)^{f_{L+3}}\right).$$

It is easy to see that $\phi$ is injective in each case.  To see that $\phi$ is injective overall, 
note that the frequency of $3$ modulo $L$ in the image distinguishes the cases,
with the exception of Cases 4(i) and 4(ii)(a) (when $i_0 = 4$):  in these two cases the frequency
of 3 is 4 modulo $L$, but in Case 4(ii)(a) the frequency is precisely 4, whereas
in Case 4(i) the frequency is at least $L + 4$.  Thus all cases are
distinguishable.  Hence the map $\phi$ is injective. 

\end{proof}
  
 We handle the cases $4 \leq L \leq 6$ in the next three lemmas.
 
 \begin{lemma}
 \label{Three}
The coefficient of $q^N$ in $H_{6,3,6}(q)$ is nonnegative whenever $N \geq 67$.
 \end{lemma}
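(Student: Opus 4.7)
The plan is to apply the combinatorial interpretation \eqref{eq:comb} of the coefficients of $H_{6,3,6}(q)$ and, as in Lemmas \ref{Helpful2} and \ref{Two2}, to construct an injective map
\begin{equation*}
 \phi \colon \{\pi \in D_{6,3} : |\pi| = N\} \to \{\pi \in I_{6,3,6} : |\pi| = N\}
\end{equation*}
for every $N \geq 67$. Writing $\pi = (4^{f_4},5^{f_5},6^{f_6},7^{f_7},8^{f_8},9^{f_9}) \in D_{6,3}$ and $f = f_6$, the main case is $f \geq 1$: since $L/3 = 2$ is an integer, the three residue classes of $f \bmod 3$ that Lemma \ref{Two2} had to separate all collapse, and we simply set
\begin{equation*}
\phi(\pi) = (3^{2f}, 4^{f_4}, 5^{f_5}, 7^{f_7}, 8^{f_8}, 9^{f_9}).
\end{equation*}

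When $f = 0$ the parts of $\pi$ lie in $\{4,5,7,8,9\}$, and I will introduce $3$'s through substitutions such as $9 \mapsto 3+3+3$, $8 \mapsto 3+5$, $7 \mapsto 3+4$, and $4+5 \mapsto 3+3+3$, together with the multiplicative replacements $4^3 \mapsto 3^4$ and $5^3 \mapsto 3^5$. The subcases will be ordered by a priority on the nonzero frequencies of $\pi$ (for instance, treating $f_9 \geq 1$ first, then $f_7 \geq 1$, then $f_8 \geq 1$, and finally partitions whose parts lie in $\{4,5\}$). The numerical threshold $N \geq 67$ will be chosen so that every $\pi \in D_{6,3}(N)$ with $f = 0$ admits at least one of these substitutions and so that the multiplicative replacements have enough ``material'' to be carried out.

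The principal obstacle is injectivity, and it is precisely why $L = 6$ is excluded from Lemma \ref{Two2}: the natural Case $4$(i) substitution there with $f_{L+2} = f_8 \geq 6$ yields $2L+4 = 16$ parts of $3$ in the image, which collides with the Case $1$ image at $f = 8$, since $2 \cdot 8 = 16$ as well. To overcome this, I will follow the organizing principle of the proof of Lemma \ref{Helpful2}, grouping all subcases of $\phi$ by the frequency of $3$ in $\phi(\pi)$ and then distinguishing them via the secondary frequency profile of $\phi(\pi)$ (the multiplicities of $4,5,7,8$, or the next smallest part). This may force a few exceptional values of $f$ such as $f = 8$, or specific small configurations in the $f = 0$ subcases, to be peeled off and treated with an ad hoc substitution chosen so that its image has a frequency of $3$ that none of the generic subcases produces.
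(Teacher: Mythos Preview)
Your Case~1 is exactly the paper's Case~1, and your overall strategy (build an injection, organise subcases by the frequency of $3$ in the image) is the right one. But you are missing the one observation that makes the $L=6$ case clean rather than a morass of ad hoc fixes: in Case~1 the frequency of $3$ in $\phi(\pi)$ is $2f$, hence \emph{even}, so if every $f=0$ subcase is arranged to produce an \emph{odd} frequency of $3$, no collision with Case~1 is possible at all. The paper does exactly this. When $f=0$ and $N\ge 67$, some $f_i\ge 3$ for $i\in\{4,5,7,8,9\}$ (the bound $67=2(4+5+7+8+9)+1$ is tight for this); letting $i_0$ be the least such $i$, the paper replaces $i_0^3$ by $3^{i_0}$ when $i_0\in\{5,7,9\}$, replaces $4^3$ by $3^1\cdot 9^1$ when $i_0=4$, and replaces $8^3$ by $3^3\cdot 5^3$ when $i_0=8$. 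The resulting frequencies of $3$ are $5,7,9,1,3$ respectively---all odd and all distinct---so the cases separate on the frequency of $3$ alone, with no secondary profile needed.

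By contrast, your list of candidate moves includes $4^3\mapsto 3^4$, which yields frequency of $3$ equal to $4$ and collides with Case~1 at $f=2$ in a way that secondary frequencies \emph{cannot} repair: the image of $\pi=(4^{a},5^{b},6^{2},7^{c},8^{d},9^{e})$ under Case~1 and the image of $\pi'=(4^{a+3},5^{b},7^{c},8^{d},9^{e})$ under $4^3\mapsto 3^4$ are the identical partition $(3^4,4^{a},5^{b},7^{c},8^{d},9^{e})$. So that particular substitution must be discarded, not merely patched. Your plan could still be pushed through by choosing different substitutions, but once you notice the even/odd split the whole $f=0$ analysis collapses to three lines.
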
 
 
 \begin{proof}
 
	 Again it suffices to show that for $L = 6$ and fixed $N \geq 67$, there is an injective
	 map $\phi$ as in \eqref{eq:gamma}.  Recall that partitions of $N$ in $I_{6,3,6}$, the codomain
	 of $\phi$,
have smallest part $3$, no part equal to $6$, and 
largest part at most 9. Let $\pi = \left(4^{f_4}, \ldots, 6^{f_6},
\ldots, 9^{f_9}\right)$ be a partitions of $N$ in $D_{6,3}$ and let $f=f_6$.  We define
$\phi(\pi)$ in cases depending on the value of $f$.

Case $1$: $f > 0$. Define $$ \phi(\pi) = \left(3^{2f}, 4^{f_4}, 5^{f_5}, 6^0, 7^{f_7}, 8^{f_8}, 9^{f_9} \right). $$

Case $2$: $f=0$. Then $\pi = (4^{f_4},5^{f_5}, 6^0, 7^{f_7}, 8^{f_8}, 9^{f_9})$. Since $N \geq 67$, there exists $ 4 \leq i \leq 9$ such that $i \neq 6$ and
$f_i \geq 3$. Let $i_0$ be the least such number. Note that the condition on $N$ is tight for this to happen.

Case $2$(i): $i_0$ is odd. So $i_0$ is $5,7$ or $9$. Define $$ \phi(\pi) = \left(3^{i_0}, \ldots, 6^0, \ldots  i_0^{f_{i_0}-3}  \ldots \right). $$

Case $2$(ii): $i_0 = 4$.  Define $$ \phi(\pi) = \left(3^1, 4^{f_4-3}, 5^{f_5}, 6^0, 7^{f_7}, 8^{f_8}, 9^{f_9+1} \right). $$

Case $2$(iii): $i_0 = 8$.  Define $$ \phi(\pi) = \left(3^3, 4^{f_4}, 5^{f_5+3}, 6^0, 7^{f_7}, 8^{f_8-3}, 9^{f_9} \right). $$

It is easy to see that $\phi$ is injective in each case.  To see that $\phi$ is injective overall, 
note that the frequency of $3$ in the image distinguishes the cases. Hence the map $\phi$ is injective.

 \end{proof}
 
 \begin{lemma}
 \label{Four}
The coefficient of $q^N$ in $H_{5,3,5}(q)$ is nonnegative whenever $N \geq 164$.
 \end{lemma}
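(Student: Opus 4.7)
\textbf{Plan for the proof of Lemma \ref{Four}.} Following the template of Lemmas \ref{Helpful2}, \ref{Two2}, and \ref{Three}, I will establish the claim by constructing an injection
\[
\phi : \{\pi \in D_{5,3} : |\pi|=N\} \longrightarrow \{\pi \in I_{5,3,5} : |\pi|=N\}
\]
as in \eqref{eq:gamma}. Recall that here $D_{5,3}$ consists of partitions with parts in $\{4,5,6,7,8\}$, while $I_{5,3,5}$ consists of partitions with smallest part $3$, no part equal to $5$, and largest part at most $8$. Let $\pi=(4^{f_4},5^{f_5},6^{f_6},7^{f_7},8^{f_8})$ be a partition of $N \geq 164$ in $D_{5,3}$, and set $f=f_5$. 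The definition of $\phi$ will branch on the residue of $f$ modulo $3$, because $5f$ is a multiple of $3$ precisely when $3\mid f$.

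\textbf{Case $f\equiv 0 \pmod 3$, $f>0$.} Replace all the $5$'s by threes: set $\phi(\pi)=(3^{5f/3},4^{f_4},6^{f_6},7^{f_7},8^{f_8})$. \textbf{Case $f\equiv 1 \pmod 3$.} Replace $f-1$ fives by $5(f-1)/3$ threes, then absorb the remaining single $5$ using one of the identities
\[
5+4=3\cdot 3,\qquad 5+6=3+8,\qquad 5+7=4\cdot 3,\qquad 5+8=3+4+6,
\]
with subcases based on which of $f_4,f_6,f_7,f_8$ is positive (and further subcases, analogous to those in Lemma \ref{Helpful2}, when $\pi$ has very few part types). \textbf{Case $f\equiv 2 \pmod 3$.} Replace $f-2$ fives by $5(f-2)/3$ threes and absorb the remaining two $5$'s using $10=3+7$ or $10=4+6$, again with subcases depending on $f_4,f_6,f_7,f_8$. \textbf{Case $f=0$.} Here no adjustment of fives is needed, but we must still create a part of size $3$. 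Proceed as in Case 4 of Lemma \ref{Two2} and Case 2 of Lemma \ref{Three}: the hypothesis $N\geq 164$ forces some $f_i$ with $i\in\{4,6,7,8\}$ to be large enough (e.g.\ at least $3$, or at least some threshold depending on $i$) that we can excise a chunk of weight from that part and replace it with the appropriate number of $3$'s.

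To prove injectivity, I will group the subcases by the frequency of $3$ in $\phi(\pi)$ (and, where several cases share the same frequency of $3$, by secondary invariants such as the frequencies of $4$, $6$, $7$, $8$ or the parity of the largest newly created part), exactly as done in the \aone--\kone analysis of Lemma \ref{Helpful2}. Within each group the reconstruction of $\pi$ from $\phi(\pi)$ will be essentially algorithmic: read off $f$ from the $3$-frequency modulo $5$, read off the untouched frequencies directly, and invert the absorption identity to recover any part that was modified.

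\textbf{Expected main obstacle.} The essential difficulty, and the reason the threshold $N\geq 164$ is so much larger than in Lemma \ref{Three}, is that for $L=5$ we have $L-3=2<s+1=4$, so the uniform substitution of Lemma \ref{Two2} (which replaces one copy of $L$ by a $3$ and an $(L-3)$) is not available. This forces the residue-class case split above and a proliferation of subcases, each of which must be checked for (i)\ landing in $I_{5,3,5}$ (no $5$'s, largest part $\leq 8$, at least one $3$) and (ii)\ non-overlap with every other subcase in the image. Pinning down the bound $N\geq 164$ will amount to ensuring that in the $f=0$ branch there is always a suitably large $f_i$ available, and that in the $f\equiv 1,2 \pmod 3$ branches the residual absorption identities can be applied; the verification that this bound is tight (or the identification of the exact obstruction at $N=163$) is where the bookkeeping is most delicate.
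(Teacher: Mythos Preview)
Your overall strategy---an injection $\phi:D_{5,3}\to I_{5,3,5}$ branching on $f=f_5$ modulo $3$---is the right one and matches the paper's framework, but your case analysis is more tangled than necessary, and you have misidentified where the bound $164$ comes from.

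The paper's key simplification, which you are missing, is that for $f\equiv 1\pmod 3$ with $f\geq 4$ one can convert four of the fives at once via $5\cdot 4 = 3\cdot 4 + 4\cdot 2$, and for $f\equiv 2\pmod 3$ one can convert two of the fives via $5\cdot 2 = 3+7$. Both of these identities only \emph{create} parts; they do not consume any existing $4$'s, $6$'s, $7$'s, or $8$'s. Consequently the paper needs \emph{no} subcases on $f_4,f_6,f_7,f_8$ in these residue classes, and the frequency of $3$ in the image is automatically $\equiv 0,4,1\pmod 5$ in Cases $1,2,3$ respectively, making injectivity immediate. Your ``absorption'' identities $5+4=9$, $5+6=3+8$, etc., all consume an existing part, which is what forces you into the thicket of subcases and secondary invariants you anticipate.

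Second, the bound $164$ does not arise from the $f=0$ branch. In the paper, $f=0$ only requires something like $N\geq 129$. The tight case is $f=1$, which the paper isolates as a separate Case~5 (since the uniform $f\equiv 1$ move above needs $f\geq 4$). There, with $\pi=(5^1,6^{f_6},7^{f_7},8^{f_8})$ and $f_4=0$, one needs one of $f_6\geq 11$, $f_7\geq 7$, or $f_8\geq 8$ to hold in order to manufacture enough $3$'s in a way that keeps the residue of the $3$-frequency in $\{2,3\}\pmod 5$ (disjoint from Cases $1$--$3$); the extremal configuration $f_6=10$, $f_7=6$, $f_8=7$ gives $N=163$. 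Your plan folds $f=1$ into the general $f\equiv 1$ branch, which obscures this and makes it unclear how you would arrive at (or justify) the specific threshold $164$.
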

 
 \begin{proof}
 
	 Again it suffices to show that for $L = 5$ and fixed $N \geq 164$, there is an injective map
$\phi$ as in \eqref{eq:gamma}.  Recall that partitions of $N$ in $I_{5,3,5}(q)$,
the codomain of $\phi$, have smallest part 3, no part equal to 5, and largest part
 at most 8.  Let $\pi = \left(4^{f_4}, 5^{f_5}, \ldots,
 8^{f_8}\right)$ be a partition of $N$ in $D_{5,3}$ and let $f$ denote $f_5$.  We define
$\phi(\pi)$ in cases depending on the value of $f$.

 Case $1$: $f$ is a positive number with $f \equiv 0$ (mod $3$). Define $$ \phi(\pi) = \left(3^{\frac{5f}{3}}, 4^{f_4}, 5^0, 6^{f_6}, 7^{f_7}, 8^{f_8} \right). $$
 
Case $2$:  $f>1$ and $f \equiv 1$ (mod $3$). Define $$ \phi(\pi) = \left(3^{5\left(\frac{f-4}{3}\right)+4}, 4^{f_4+2}, 5^0, 6^{f_6}, 7^{f_7}, 8^{f_8} \right). $$

Case $3$:  $f \equiv 2$ (mod $3$). Define $$ \phi(\pi) =
\left(3^{5\left(\frac{f-2}{3}\right)+1}, 4^{f_4}, 5^0, 6^{f_6}, 7^{f_7+1},
8^{f_8} \right). $$

We are left with the cases $f=0$ and $f=1$.

Case $4$: Suppose $f=0$. Then $\pi = \left(4^{f_4}, 5^0, 6^{f_6}, 7^{f_7}, 8^{f_8} \right)$. Since $N \geq 164$ is large enough, at least one of the following 
conditions is true: (i) $f_4 \geq 6$;  (ii) $f_6 \geq 1$;  (iii) $f_7 \geq 3$;
or (iv) $f_8 \geq 12$.  We deal with each case below.  Note that the condition on $N$ is not tight. The bound of $164$ will be required in Case $5$.

Case $4$(i): $f_4 \geq 6$. Define $$ \phi(\pi) = \left(3^8, 4^{f_4-6}, 5^0, 6^{f_6}, 7^{f_7},8^{f_8} \right). $$

Case $4$(ii): $f_4 \leq 5$ and $f_6 \geq 1$. Define $$ \phi(\pi) = \left(3^2, 4^{f_4}, 5^0, 6^{f_6-1}, 7^{f_7},8^{f_8} \right). $$

Case $4$(iii): $f_4 \leq 5$, $f_6 = 0$ and $f_7 \geq 3$. Define $$ \phi(\pi) = \left(3^7, 4^{f_4}, 5^0, 7^{f_7-3},8^{f_8} \right). $$

Case $4$(iv): $f_4 \leq 5$, $f_6 = 0$, $f_7 \leq 2$ and $f_8 \geq 12$. Define $$ \phi(\pi) = \left(3^{32}, 4^{f_4}, 5^0, 7^{f_7},8^{f_8-12} \right). $$

Case $5$: $f=1$. Then $\pi = \left(4^{f_4}, 5^1, 6^{f_6}, 7^{f_7}, 8^{f_8}
\right)$. Since $N \geq 164$ is large enough, at least one of the following 
conditions is true: (i) $f_4 \geq 1$;  (ii) $f_6 \geq 11$; (iii) $f_7 \geq 7$;
or (iv) $f_8 \geq 8$.  We deal with each case below.  Note that the condition on $N$ is tight here.  
Case $5$(i): $f_4 \geq 1$. Define $$ \phi(\pi) = \left(3^3, 4^{f_4-1}, 5^0, 6^{f_6}, 7^{f_7},8^{f_8} \right). $$

Case $5$(ii): $f_4 = 0$ and $f_6 \geq 11$. Define $$ \phi(\pi) = \left(3^{13}, 4^8, 5^0, 6^{f_6-11}, 7^{f_7},8^{f_8} \right). $$

Case $5$(iii): $f_4 = 0$, $f_6 \leq 10$ and $f_7 \geq 7$. Define $$ \phi(\pi) = \left(3^{18}, 4^0, 5^0, 6^{f_6}, 7^{f_7-7},8^{f_8} \right). $$

Case $5$(iv): $f_4 = 0$, $f_6 \leq 10$, $f_7 \leq 6$ and $f_8 \geq 8$ . Define $$ \phi(\pi) = \left(3^{23}, 4^0, 5^0, 6^{f_6}, 7^{f_7},8^{f_8-8} \right). $$

It is easy to see that $\phi$ is injective in each case.  To see that $\phi$ is injective overall, 
note that the frequency of $3$ in the image distinguishes the cases. In Cases $1$, $2$ and $3$, the frequency of $3$ is $0, 4$ and $1$ modulo $5$, respectively. In Cases $4$ and $5$, it is always $2$ or $3$ modulo $5$ and different for each subcase. Hence the map $\phi$ is injective.

\end{proof}

\begin{lemma}
\label{Five}
The coefficient of $q^N$ in $H_{4,3,4}(q)$ is nonnegative whenever $N \geq 1042$.
\end{lemma}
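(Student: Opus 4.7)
My plan is to mimic the structure of Lemmas \ref{Three} and \ref{Four} by constructing an injection $\phi$ as in \eqref{eq:gamma} for $L = 4$ and $N \geq 1042$. A partition $\pi = (4^{f_4}, 5^{f_5}, 6^{f_6}, 7^{f_7})$ in $D_{4,3}$ has parts in $\{4,5,6,7\}$, while a partition in $I_{4,3,4}$ has parts in $\{3,5,6,7\}$ with at least one copy of $3$. Hence $\phi$ must convert the $f := f_4$ copies of $4$ into a combination of parts from $\{3,5,6,7\}$ of equal total weight, producing at least one $3$ in the process.

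For $f > 0$, I would split into three cases according to $f \bmod 3$. When $f \equiv 0 \pmod 3$, the identity $4f = 3\cdot(4f/3)$ replaces every $4$ with a total of $4f/3$ threes. When $f \equiv 1 \pmod 3$ and $f \geq 4$, the identity $4f = 3 \cdot \tfrac{4f-10}{3} + 2\cdot 5$ produces $(4f-10)/3$ threes together with two additional fives. When $f \equiv 2 \pmod 3$, the identity $4f = 3 \cdot \tfrac{4f-5}{3} + 5$ produces $(4f-5)/3$ threes and one additional five. The resulting frequencies of $3$ in $\phi(\pi)$ are $\equiv 0,\,2,\,1 \pmod 4$ respectively, so these three cases are pairwise distinguishable by this residue.

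The cases $f = 0$ and $f = 1$ are the substantive part, and I would engineer the image so that the frequency of $3$ always lies in the remaining residue class $3 \pmod 4$. For $f = 0$, with $\pi = (5^{f_5}, 6^{f_6}, 7^{f_7})$, I would split according to which of $f_5, f_6, f_7$ is large and use substitutions such as $3\cdot 7 \to 7\cdot 3$ (frequency of $3$ equal to $7$) and longer combinations giving frequencies of $3$ of the form $4k+3$. For $f = 1$, with $\pi = (4^1, 5^{f_5}, 6^{f_6}, 7^{f_7})$, the natural starting move is $4 + 5 \to 3\cdot 3$ (frequency $3$); when $f_5 = 0$ I would introduce further subcases combining the lone $4$ with $6$s and $7$s so that the frequency of $3$ stays $\equiv 3 \pmod 4$.

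The main obstacle is choosing these subcases so that each is individually injective, any two distinct subcases produce images with different exact frequencies of $3$ (even though all lie in a common residue class mod $4$), and the hypothesis $N \geq 1042$ guarantees that at least one subcase always applies in both Case $4$ and Case $5$. The bound should be tight in Case $5$ ($f = 1$): the argument is that if none of the prescribed substitutions succeeds, then $f_5, f_6, f_7$ are each bounded by explicit constants, yielding $4 + 5 f_5 + 6 f_6 + 7 f_7 \leq 1041$. This is analogous to the tight bound $N \geq 164$ in Case $5$ of Lemma \ref{Four}, but requires more elaborate subcasing because only a single residue class mod $4$ is available for the frequency of $3$ in Cases $4$ and $5$. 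Injectivity of $\phi$ is then verified exactly as in the earlier lemmas, by collecting cases according to the frequency of $3$ in $\phi(\pi)$.
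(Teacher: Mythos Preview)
Your plan is viable but departs from the paper's construction in an interesting way. For $f = f_4 \geq 2$ you mimic Lemmas~\ref{Two2} and \ref{Four}, casing on $f \bmod 3$ so that the frequency of $3$ in $\phi(\pi)$ falls into residue classes $0,\,2,\,1 \pmod 4$. The paper instead writes $4f = 3\cdot f + f$ and invokes Lemma~\ref{5,6,7} to express the leftover $f$ as $5x_f + 6y_f + 7z_f$ (valid for $f \geq 10$), setting $\phi(\pi) = (3^f, 5^{f_5+x_f}, 6^{f_6+y_f}, 7^{f_7+z_f})$; the cases are then separated by disjoint integer \emph{ranges} for the frequency of $3$ rather than by residues. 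This leaves all of $0 \leq f \leq 9$ to be treated uniformly, which the paper does via three subcases assigning frequencies $f+100,\, f+110,\, f+120$ according to whether $f_5 \geq 62$, $f_6 \geq 57$, or $f_7 \geq 53$---and it is precisely those thresholds that make the bound $N = 1042$ tight. Your scheme, by contrast, isolates only $f \in \{0,1\}$ as exceptional, and the handful of substitutions needed there (each with a distinct $3$-frequency congruent to $3 \pmod 4$, e.g.\ $7\cdot 3 \to 3\cdot 7$, $4+5 \to 3\cdot 3$, and a few more) would in fact force a threshold well below $1042$; so your expectation that $1041$ emerges as the tight bound in your Case~5 will almost certainly not materialise under your own construction, though a smaller bound only strengthens the lemma. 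The one piece of work you have genuinely deferred is exhibiting those $f\in\{0,1\}$ subcases explicitly and checking that their $3$-frequencies are pairwise distinct; this is routine once attempted, but it is where the content of the proof lives.
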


\begin{proof}

	Again it suffices to show that for $L = 4$ and fixed $N \geq 1042$, there is an
injective map $\phi$ as in \eqref{eq:gamma}.  Recall that partitions of $N$ in
$I_{4,3,4}(q)$, the codomain of $\phi$, have smallest part $3$, no part equal to $4$, and largest part at most 7.  Let $\pi =
(4^{f_4},5^{f_5}, 6^{f_6}, 7^{f_7})$ be a partitions of $N$ in $D_{4,3}$ and let $f =
f_4$.  We define $\phi(\pi)$ in cases depending on $f$.

For $n \geq 10$, Lemma \ref{5,6,7} guarantees that there exists nonnegative integer solutions $(x_n,y_n,z_n)$ of the equation 
\begin{equation}\label{eq:definvert}
	n = 5x_n+6y_n+7z_n.
\end{equation}
For each $n \geq 10$, fix a nonnegative integer solution $(x_n,y_n,z_n)$ to the equation. 

Case $1$: $10 \leq f < 100$. Define $$ \phi(\pi) = (3^f, 4^0, 5^{f_5+x_f}, 6^{f_6+y_f}, 7^{f_7+z_f}). $$
It is easy to see that $\phi$ is injective in this case:  given an element
$\phi(\pi)$  whose frequency is between 10 and 100, the frequency of 3
gives $f$, and the values of $x_f, y_f$ and $z_f$ can be found from
\eqref{eq:definvert}.  The partition $\pi$ can then be reconstructed.  Similar
arguments can be used to show that $\phi$ is injective in the other cases below.

Case $2$: $f \geq 100$. Define $$ \phi(\pi) = (3^{f+30}, 4^0, 5^{f_5+x_{f-90}},  6^{f_6+y_{f-90}},  7^{f_7+z_{f-90}}). $$

Case $3$: $0 \leq f \leq 9$. Since $N \geq 1042$ is large enough, at least one of the following 
conditions is true: (i) $f_5 \geq 62$;  (ii) $f_6 \geq 57$; or (iii) $f_7 \geq
53$.  We deal with each case below.  Note that the condition on $N$ is in fact tight here.

Case $3$(i): $f_5 \geq 62$. Define $$ \phi(\pi) = \left(3^{f+100}, 4^0, 5^{f_5-62+x_{f+10}}, 6^{f_6+y_{f+10}}, 7^{f_7+z_{f+10}} \right). $$

Case $3$(ii): $f_5 \leq 61$ and $f_6 \geq 57$. Define $$ \phi(\pi) = \left(3^{f+110}, 4^0, 5^{f_5+x_{f+12}}, 6^{f_6-57+y_{f+12}}, 7^{f_7+z_{f+12}} \right). $$

Case $3$(iii): $f_5 \leq 61$, $f_6 \leq 56$ and $f_7 \geq 53$. Define $$ \phi(\pi) = \left(3^{f+120}, 4^0, 5^{f_5+x_{f+11}}, 6^{f_6+y_{f+11}}, 7^{f_7-53+z_{f+11}} \right). $$

It is easy to see that $\phi$ is injective in each case.  To see that $\phi$ is injective overall, 
note that the frequency of $3$ in the image distinguishes the cases. Hence the map $\phi$ is injective. 

\end{proof}

Lemmas \ref{Helpful2}, \ref{Two2}, \ref{Three}, \ref{Four} and \ref{Five} show
that for N larger than a small number, the coefficient of $q^N$ in
$H_{L,s,L}(q)$ is nonnegative.  With these results, the use of computer
searches, and Theorem \ref{Generals} (with $s=3$), we can now prove Theorem \ref{GLthree}.

\begin{proof}[Proof of Theorem \ref{GLthree}]
Let $H_{L,3,L}(q) = \sum_{N \geq 0} a_{L,N} q^N $ and $G_{L,3}(q) = \sum_{N \geq 0} b_{L,N} q^N$.  
By Theorem \ref{Generals},
\begin{equation}\label{eq:blnaln}
	b_{L,N} = a_{L,N} + a_{L,N-L} + a_{L,N-2L} + \cdots.
\end{equation}
	
First we focus on the case $L \geq 22$.  By Lemma \ref{Helpful2}, the
	coefficients satisfy $a_{L,N} \geq 1$ whenever $N \geq 21$.  For $N \leq 20$, we
	observe that $a_{L,N}$ is independent of $L$. To see why, the
	combinatorial interpretation of $a_{L,N}$ in \eqref{eq:comb} gives that,
	when $L \geq N + 1$,  the number $a_{L,N}$  is the difference between the number of partitions of $N$ with
	smallest part $3$ and the number of partitions of $N$ with smallest part at
	least $4$; that is, the condition on the largest part of the partitions
	becomes superfluous.  Thus, when $1 \leq N \leq 20$, since $L \geq N + 1$, we have $a_{L, N} =
	a_{N+1,N}$, and these 20 values can all be found by a computer search.   The search
	finds that $a_{N+1,N}$ is negative only when $N$ is one of $4,5,8,10,12,14$ or $16$, and
	in each case $a_{N+1,N}$ is exactly $-1$.  Thus, for any $N \geq 1$, the right hand side of
	\eqref{eq:blnaln} contains at most one term equal to -1, while the rest of
	the terms are
	positive.  It follows from \eqref{eq:blnaln} that $b_{L,N}$ is negative only when $N$ is one of $4,5,8,10,12,14$ or $16$, and in each case $b_{L,N}$ is exactly $-1$.  This gives Theorem \ref{GLthree} for $L \geq 22$.

The remaining cases are easier.  For $7 \leq L \leq 21$, Lemma \ref{Two2}
renders the unknown values of $a_{L,N}$ to the cases $N \leq N_L$, a finite set
of values that can be searched using a computer.  These computations along with
\eqref{eq:blnaln} give Theorem \ref{GLthree}.  Similarly, for $4 \leq L
\leq 6$, the Lemmas \ref{Three}, \ref{Four} and \ref{Five} leave only a finite
number of unknown values for $a_{L,N}$ for small $N$, which can all be found
using a computer.  These values and an application of \eqref{eq:blnaln} complete the proof of Theorem \ref{GLthree}. 
\end{proof}

\begin{remark}
The programming for $7 \leq L \leq 21$ and $N \leq N_L$ turned out to be a
difficult task in Magma. For example, it is hard to calculate the number of
partitions of $250$ with all parts in the set \{4,5, \ldots, 17\} using Magma.
The command $Partitions(250, min\_part=4, max\_part =17). cardinality()$ in Sage
also does not work (it takes too long and ultimately stops working). We overcame
this problem through another related command and some mathematics. In Sage, we
noticed that the command $Partitions(n, max\_part =17). cardinality()$ is very fast even for large $n$ (even until $n=1000000$, it is fast!) Thus, we calculate the number of partitions with all parts in the set $\{4,5, \ldots, 17\}$  in terms of the number of partitions $p_{17}(n)$ of $n$ with maximum part at most $17$. We do this by viewing partitions with all parts in the set $\{4,5, \ldots, 17\}$ as partitions with maximum part $17$ and no part $1$, $2$ and $3$. Let $A$, $B$ and $C$ denote the set of partitions of $n$ with maximum part $17$ and also having $1$, $2$ and $3$ as a part, respectively. Then we need to find the cardinality of the set $A^{\complement} \cap B^{\complement} \cap C^{\complement}$. Using inclusion and exclusion principle, we get that the number of partitions of $n$ with all parts in the set $\{4,5, \ldots, 17\}$ is given by $p_{17}(n)-p_{17}(n-1)-p_{17}(n-2)+p_{17}(n-4)+p_{17}(n-5)-p_{17}(n-6)$, and thus can be easily computed.
\end{remark}

\section{The series $G_{L,s}(q)$ for $s \geq 4$ and other generalizations}

For fixed $L$ and $s$, let $p_{L,s}(q)$ be the
smallest degree polynomial in $q$ with smallest coefficients such that $G_{L,s}(q) +
p_{L, s}(q) \succeq 0$.   This paper finds $p_{L,s}(q)$ for $s=3$ and all $L$,
while the cases $s=2$ (found in Theorem \ref{s=2})
and $s=1$ (found in \cite{BerkovichAlexander2017SEPI}) were found earlier.  One goal is to find
$p_{L,s}(q)$ for general $s$ and $L$.  Our numerical
computations for $s=4$ and $s=5$ do not suggest a nice form for $p_{L,s}(q)$.
A simpler problem, though also interesting, is to determine the degree of
$p_{L,s}(q)$ for general $L$ and $s$.  Furthermore, we suspect that for fixed $s \geq 4$ that
$p_{L,s}(q)$ stabilizes;  that is, there is an $L_0$ such that $p_{L,s}(q) =
p_{L_0, s}(q)$ for all $L \geq L_0$, as in the case for $s=3$ (Theorem
\ref{GLthree} gives $L_0 = 10$ when $s=3$).  Finding $L_0$ as a function of $s$ is also
another open problem.

Moreover, series analogous to $G_{L,s}(q)$ for partitions with further
restrictions on parts, such as for partitions with only odd parts or for self
conjugate partitions, may prove interesting. 
%
%\begin{thebibliography}{100}
%\bibliographystyle{amsalpha}
%
%\bibitem{Aigner}  M. Aigner,  \emph{A Course in Enumeration} (Vol. 238). Springer Science \& Business Media, 2007.
%\bibitem{BerkovichAlexander2017SEPI} A. Berkovich and A. K. Uncu, Some elementary partition inequalities and their implications. {\it Ann. Comb.}, 23:263–284, 2019.
%\bibitem{BR20} D. S. Binner and A. Rattan, On Conjectures Concerning the Smallest Part and Missing Parts of Integer Partitions, {\it Ann. Comb.}, in Press, https://doi.org/10.1007/s00026-021-00528-5, 2021.
%\bibitem{Prib} W. de Azevedo Pribitkin, Simple upper bounds for partition functions, {\it Ramanujan J.}, {\bf 18}, 113--119, 2009.
%\bibitem{AT} A. Tripathi, The number of solutions to $ax+by=n$, {\it Fibonacci Quart.} {\bf 38}, 290--293, 2000.
%\bibitem{zang} W. J. T. Zang and J. Zeng, Gap between the largest and smallest parts of partitions and Berkovich and Uncu’s conjectures, arXiv:2004.12871 [math.CO], 2020.
%
%\end{thebibliography} 

\bibliography{GLs}
\bibliographystyle{amsalpha}

%\attentiondam{
%
%\noindent Damanvir Singh Binner \\
%Indian Institute of Science Education and Research (IISER), Mohali, India \\
%damanvirbinnar@iisermohali.ac.in \\
%\\
%Amarpreet Rattan \\
%Simon Fraser University, Canada \\
%rattan@sfu.ca}

\end{document}